\newcommand{\RR}{\mathbb{R}}
\newcommand{\image}{\textnormal{im}\,}
\newcommand{\Hom}{\textnormal{Hom}}
\newcommand{\dimension}{\textnormal{dim}\,}
\newcommand{\rank}{\textnormal{rk}\,}
\newcommand{\Ext}{\textnormal{Ext}}
\newcommand{\Ac}{\mathcal{A}}
\newcommand{\Bc}{\mathcal{B}}
\newcommand{\Cc}{\mathcal{C}}
\newcommand{\Fc}{\mathcal{F}}
\newcommand{\Tc}{\mathcal{T}}
\newcommand{\Coh}{\mathrm{Coh}}
\newcommand{\arinj}{\ar@{^{(}->}}
\newcommand{\arsurj}{\ar@{->>}}
\newcommand{\areq}{\ar@{=}}
\newcommand{\wh}{\widehat}
\newcommand{\Bl}{\mathcal{B}^l}
\newcommand{\ch}{\mathrm{ch}}
\newcommand{\scalea}{\scalebox{0.5}}
\newcommand{\oo}{{\overline{\omega}}}
\newcommand{\whPhi}{{\wh{\Phi}}}
\begin{document}

\title{Fourier-Mukai transforms of slope stable torsion-free sheaves on Weierstrass elliptic surfaces}

\author[Jason Lo]{Jason Lo}
\address{Department of Mathematics \\
California State University Northridge\\
18111 Nordhoff Street\\
Northridge CA 91330 \\
USA}
\email{jason.lo@csun.edu}

\keywords{Weierstrass surface, elliptic surface, Fourier-Mukai transform, stability}
\subjclass[2010]{Primary 14J27; Secondary: 14J33, 14J60}

\begin{abstract}
On a  Weierstra{\ss} elliptic surface $X$, we define a `limit' of Bridgeland stability conditions, denoted $Z^l$-stability, by varying the polarisation in the definition of Bridgeland stability along a curve in the ample cone of $X$.  We show that a slope stable torsion-free sheaf of positive (twisted) degree or a slope stable locally free sheaf is taken by a Fourier-Mukai transform on $D^b(X)$ to a $Z^l$-stable object, while a $Z^l$-semistable object of nonzero fiber degree can be modified so that its inverse Fourier-Mukai transform is a slope semistable torsion-free sheaf.
\end{abstract}

\maketitle
\tableofcontents

\section{Introduction}

Fourier-Mukai transforms on elliptic surfaces have been intensely studied over the years.  Understanding the image of a slope stable or Gieseker stable torsion-free sheaf under a Fourier-Mukai transform has been a major problem in this area and considered by numerous authors, in works such as \cite{BriTh,FMTes,bernardara2014,YosAS,YosPI,YosPII,FMNT}.  In this article, we give a fresh approach to this problem by interpreting the Fourier-Mukai transform of slope stability for sheaves as a `limit' of Bridgeland stability.

More precisely, recall that the construction of Bridgeland stability conditions depends on the choice of a polarisation $\omega$.  On  a Weierstra{\ss} surface $X$, by varying the polarisation $\omega$ along a curve in the ample cone, we define a `limit' of Bridgeland stability conditions, denoted as `$Z^l$-stability' in the article.    In our main theorem, Theorem \ref{thm:Lo14Thm5-analogue}, we show that on a Weierstra{\ss} elliptic surface $X$, if $E$ is a slope stable torsion-free sheaf of positive twisted degree or  a slope stable locally free sheaf, then the Fourier-Mukai transform of $E$ is a $Z^l$-stable object; on the other hand, if $F$ is a $Z^l$-semistable object of nonzero fiber degree, then $F$ has a  modification $F'$ where the inverse Fourier-Mukai transform of $F'$ is a slope semistable torsion-free sheaf.  The key premise of Theorem \ref{thm:Lo14Thm5-analogue} is that, in addition to assuming the sheaf $E$ is torsion-free, we do not fix the Chern character of $E$.  That is, we attempt to understand the action of the Fourier-Mukai transform on the very notion of slope stability for sheaves in general, rather than slope stability for sheaves of a specific Chern character.

After setting up the preliminaries and introducing the cohomological Fourier-Mukai transforms in Section \ref{sec:prelim}, we give the precise construction of $Z^l$-stability, considered as a limit of Bridgeland stability conditions, on a Weierstra{\ss} surface in Section \ref{sec:limitBridgelandconstr}.  We prove our main result, Theorem \ref{thm:Lo14Thm5-analogue}, in Section \ref{sec:maintheorem}, and verify the Harder-Narasimhan property of $Z^l$-stability in Section \ref{sec:HNproperty}.  We end this article with a connection between $Z^l$-stability and Bridgeland stability conditions in Section \ref{sec:BrivslimitBri}.

The essential ideas in this paper have appeared in the author's preceding works on elliptic threefolds \cite{Lo14,Lo15}.  In \cite{Lo14}, the author considered the product $X = C \times S$ of a smooth elliptic curve and a K3 surface $S$ of Picard rank 1.  By considering this product threefold as a trivial elliptic fibration over $S$ via the second projection, the author proved an analogue of Theorem \ref{thm:Lo14Thm5-analogue}, with `limit tilt stability' on a threefold playing the role of `limit Bridgeland stability' in this article.  The results in \cite{Lo14} were then generalised in the first half of \cite{Lo15} to a class of Weierstra{\ss} elliptic threefolds, where the base of the elliptic fibration could be a Fano surface, an Enriques surface or a K3 surface.  In the second half of \cite{Lo15}, the same ideas were carried out on the triangulated category of complexes that vanish on the generic fiber of the fibration, giving a notion of limit stability on this triangulated category.  It was shown in \cite{Lo15} that this limit stability corresponds to slope stability for 1-dimensional sheaves under a Fourier-Mukai transform.

The present article demonstrates that the construction in \cite{Lo14,Lo15} holds not only on elliptic threefolds but also on elliptic surfaces.

\paragraph[Acknowledgements] The author thanks Ching-Jui Lai,  Wanmin Liu and  Ziyu Zhang for many helpful discussions on elliptic fibrations.  He would also like to thank the National Center for Theoretical Sciences in Taipei for their support and hospitality during  December 2016-January 2017, when this work was initiated.  He also  thanks the Center for Geometry and Physics, Institute for Basic Science in Pohang, South Korea, for their support and hospitality throughout the author's visit in June-July 2017, during which much of this work was completed.

\section{Preliminaries}\label{sec:prelim}

\paragraph[Our elliptic fibration]  Throughout \label{para:ourellipticfibration} this article, unless otherwise stated, we will write $p : X \to B$ to denote an elliptic surface that is a Weierstra{\ss} fibration in the sense of \cite[Section 6.2]{FMNT} where $X$ is a smooth projective surface and $B$ is a smooth projective curve.  In particular, the morphism $p$ is  flat  with all the fibers  geometrically integral,  and $p$ admits a section $\sigma : B \hookrightarrow X$ with image $\Theta = \sigma (B)$ that does not intersect any singular point of the singular fibers.  We do not place any  restriction on the Picard rank of $X$.

\paragraph[Notation] We collect here preliminary notions and notations that will be used throughout the article.

\subparagraph[Twisted Chern character] For any divisor $B$ on a smooth projective  surface $X$ and any $E \in D^b(X)$, the twisted Chern character $\ch^B(E)$ is defined as
\[
\ch^B(E) = e^{-B}\ch(E)
= (1-B+\tfrac{B^2}{2})\ch(E).
\]
  We write $\ch^B(E) = \sum_{i=0}^2 \ch_i^B(E)$ where
\begin{align*}
  \ch_0^B(E) &= \ch_0(E) \\
  \ch_1^B(E) &= \ch_1(E)-B\ch_0(E) \\
  \ch_2^B(E) &= \ch_2(E) -B\ch_1(E)+\tfrac{B^2}{2}\ch_0(E)
\end{align*}
We sometimes refer to the divisor $B$ involved in the twisting of the Chern character as the `$B$-field'.  In this article, there should be no risk of confusion as to whether $B$ refers to the base of the elliptic fibration $p$ or a $B$-field.

\subparagraph[Cohomology] Suppose $\Ac$ is an abelian category and $\Bc$ is the heart of a t-structure on $D^b(\Ac)$.  For any object $E \in D^b(\Ac)$, we will write $\mathcal{H}^i_{\Bc}(E)$ to denote the $i$-th cohomology object of $E$ with respect to the t-structure with heart $\Bc$.  When $\Bc = \Ac$, i.e.\ when the aforementioned t-structure is the standard t-structure on $D^b(\Ac)$, we will write $H^i(E)$ instead of $\mathcal{H}^i_{\Ac}(E)$.

Given a smooth projective variety $X$, the dimension of an object $E \in D^b(X)$ will be denoted by $\dimension E$, and refers to the dimension of its support, i.e.\
\[
  \dimension E = \dimension \bigcup_i \mathrm{supp}\, H^i(E).
\]
That is, for a coherent sheaf $E$, we have $\dimension E = \dimension \mathrm{supp} (E)$.

\subparagraph[Torsion pairs and tilting]  A \label{para:torsionpairtilting} torsion pair $(\Tc, \Fc)$ in an abelian category $\mathcal{A}$ is a pair of full subcategories $\Tc, \Fc$ such that
\begin{itemize}
\item[(i)] $\Hom_{\Ac}(E', E'')=0$ for all $E' \in \Tc, E'' \in \Fc$.
\item[(ii)] Every object $E \in \Ac$ fits in an $\Ac$-short exact sequence
\[
0 \to E' \to E \to E'' \to 0
\]
for some $E' \in \Tc, E'' \in \Fc$.
\end{itemize}
The decomposition of $E$ in (ii) is canonical \cite[Chapter 1]{HRS}, and we will occasionally refer to it as the $(\Tc,\Fc)$-decomposition of $E$ in $\Ac$. Whenever we have a torsion pair $(\Tc, \Fc)$ in an abelian category $\mathcal{A}$, we will refer to $\Tc$ (resp.\ $\Fc$) as the torsion class (resp.\ torsion-free class) of the torsion pair.  The extension closure in $D^b(\Ac)$
\[
  \Ac' = \langle \Fc [1], \Tc \rangle
\]
is the heart of a t-structure on $D^b(\Ac)$, and hence an abelian subcategory of $D^b(\Ac)$.  We call $\Ac'$ the tilt of $\Ac$ at the torsion pair $(\Tc, \Fc)$.  More specifically, the category $\Ac'$ is the heart of the t-structure $(D^{\leq 0}_{\Ac'}, D^{\geq 0}_{\Ac'})$ on $D^b(\Ac)$ where
\begin{align*}
  D^{\leq 0}_{\Ac'} &= \{ E \in D^b(\Ac) : \mathcal{H}_{\Ac}^0 (E)\in \Tc, \mathcal{H}^i_{\Ac} (E)= 0 \, \forall\, i > 0 \}, \\
  D^{\geq 0}_{\Ac'} &= \{ E \in D^b(\Ac) : \mathcal{H}_{\Ac}^{-1} (E)\in \Fc, \mathcal{H}^i_{\Ac} (E)= 0 \,\forall\, i < -1 \}.
\end{align*}
A subcategory of $\Ac$ will be called a torsion class (resp.\ torsion-free class) if it is the torsion class (resp.\ torsion-free class) in some torsion pair in $\Ac$.  By a lemma of Polishchuk \cite[Lemma 1.1.3]{Pol}, if $\Ac$ is a noetherian abelian category, then every subcategory that is closed under extension and quotient in $\Ac$ is a torsion class in $\Ac$.

For any subcategory $\mathcal{C}$ of an abelian category $\mathcal{A}$, we will set
\[
  \mathcal{C}^\circ = \{ E \in \mathcal{A} : \Hom_{\mathcal{A}}(F,E)=0 \text{ for all } F \in \mathcal{C} \}
\]
when $\mathcal{A}$ is clear from the context.  Note that whenever $\mathcal{A}$ is noetherian and $\mathcal{C}$ is closed under extension and quotient in $\mathcal{A}$, the pair $(\mathcal{C},\mathcal{C}^\circ)$ gives a torsion pair in $\mathcal{A}$.


\subparagraph[Torsion $n$-tuples] A torsion $n$-tuple $(\Cc_1, \Cc_2,\cdots,\Cc_n)$ in an abelian category $\Ac$ as defined in \cite[Section 2.2]{Pol2} is a collection of full subcategories of $\Ac$ such that
\begin{itemize}
\item $\Hom_{\Ac} (C_i,C_j)=0$ for any $C_i \in \mathcal C_i, C_j \in \mathcal C_j$ where $i<j$.
\item Every object $E$ of $\Ac$ admits a filtration in $\Ac$
\[
  0=E_0 \subseteq E_1 \subseteq E_2 \subseteq \cdots \subseteq E_n = E
\]
where $E_i/E_{i-1} \in \mathcal C_i$ for each $1 \leq i \leq n$
\end{itemize}
(see also \cite[Definition 3.5]{Toda2}).   Note that, given a torsion $n$-tuple in $\Ac$ as above, the pair
\[
(\langle \Cc_1, \cdots, \Cc_i\rangle, \langle \Cc_{i+1}, \cdots, \Cc_n \rangle )
\]
 is a torsion pair in $\Ac$ for any $1 \leq i \leq n-1$.

\subparagraph[Fourier-Mukai transforms] For any Weierstra{\ss} elliptic fibration $p : X \to B$ in the sense of \cite[Section 6.2]{FMNT} where $X$ is smooth, there is a pair of relative Fourier-Mukai transforms $\Phi, \whPhi : D^b(X) \overset{\thicksim}{\to} D^b(X)$ whose kernels are both sheaves on $X \times _B X$, satisfying
\begin{equation}\label{eq:PhiwhPhiisidshifted}
  \whPhi \Phi = \mathrm{id}_{D^b(X)}[-1] = \Phi \whPhi.
\end{equation}
In particular, the kernel of $\Phi$ is the relative Poincar\'{e} sheaf for the fibration $p$, which is a universal sheaf for the moduli problem that parametrises degree-zero, rank-one torsion-free sheaves on the fibers of $p$.  An object $E \in D^b(X)$ is said to be $\Phi$-WIT$_i$ if $\Phi E$ is a coherent sheaf sitting at degree $i$.  In this case, we write $\wh{E}$ to denote the coherent sheaf satisfying  $\Phi E \cong \wh{E} [-i]$ up to isomorphism.  The notion of $\whPhi$-WIT$_i$ can similarly be defined.  The identities \eqref{eq:PhiwhPhiisidshifted} imply that, if a coherent sheaf $E$ on $X$ is $\Phi$-WIT$_i$ for $i=0, 1$, then $\wh{E}$ is $\whPhi$-WIT$_{1-i}$.  For $i=0,1$, we will define the category
\[
  W_{i,\Phi} = \{ E \in \Coh (X) : E \text{ is $\Phi$-WIT$_i$} \}
\]
and similarly for $\whPhi$.  Due to the symmetry between $\Phi$ and $\whPhi$, the properties held by $\Phi$ also hold for $\whPhi$.     See \cite[Section 6.2]{FMNT} for more background on the functors $\Phi, \whPhi$.

\subparagraph[Subcategories of $\Coh (X)$]  Let $p : X \to B$ be an elliptic surface as in \ref{para:ourellipticfibration}. For any integers $d \geq e$, we set
\begin{align*}
\Coh^{\leq d}(X) &= \{ E \in \Coh (X): \dimension \mathrm{supp}(E) \leq d\} \\
\Coh^d(p)_e &= \{ E \in \Coh (X): \dimension \mathrm{supp}(E) = d, \dimension p (\mathrm{supp}(E)) = e\} \\
\{ \Coh^{\leq 0} \}^\uparrow &= \{ E \in \Coh (X): E|_b \in \Coh^{\leq 0} (X_b) \text{ for all closed points $b \in B$} \}
\end{align*}
where $\Coh^{\leq 0}(X_b)$ is the category of coherent sheaves supported in dimension 0 on the fiber $p^{-1}(b)=X_b$, for the closed point $b \in B$.  We will refer to coherent sheaves that are supported on a finite number of fibers of $p$ as fiber sheaves.  Adopting the notation in \cite[Section 3]{Lo14}, we also define
\begin{align*}
   \scalea{\gyoung(;;,;;+)}&:= \Coh^{\leq 0}(X) \\
    \scalea{\gyoung(;;+,;;+)} &:= \{ E \in \Coh^1(\pi)_0 :  \text{ all $\mu$-HN factors of $E$ have $\infty>\mu>0$}\} \\
  \scalea{\gyoung(;;+,;;0)} &:=\{ E \in \Coh^1(\pi)_0 : \text{ all $\mu$-HN factors of $E$ have $\mu=0$}\} \\
  \scalea{\gyoung(;;+,;;-)} &:= \{ E \in \Coh^1(\pi)_0 :  \text{ all $\mu$-HN factors of $E$ have $\mu<0$}\} \\
   \scalea{\gyoung(;;*,;+;*)} &:= \Coh^1(\pi)_1 \cap \{\Coh^{\leq 0}\}^\uparrow\\
\scalea{\gyoung(;+;*,;+;*)} &= \{ E \in W_{0,\whPhi}: \dimension E = 2\} \\
\scalea{\gyoung(;+;*,;0;*)} &= \{ E \in \Phi (\{\Coh^{\leq 0}\}^\uparrow \cap \Coh^{\leq 1}(X)) : \dimension E = 2\} \\
\scalea{\gyoung(;+;*,;-;*)} &= \{ E \in  W_{1,\wh{\Phi}} : \dimension E =2, f\ch_1(E)\neq 0 \} .
\end{align*}
Note that the definitions of $\scalea{\gyoung(;+;*,;+;*)}, \scalea{\gyoung(;+;*,;0;*)}$ and $\scalea{\gyoung(;+;*,;-;*)}$ depend on the Fourier-Mukai functor $\whPhi$.  We will use the same notation to denote the corresponding category defined using $\whPhi$; it will always be clear from the context which Fourier-Mukai functor the definition is with respect to.  The Fourier-Mukai transform $\Phi$ induces the following equivalences, as already observed in \cite[Remark 3.1]{Lo14}:
\[
  \xymatrix @-1.3pc{
\scalea{\gyoung(;;,;;+)} \ar[dr] & \scalea{\gyoung(;;+,;;+)} \ar@/^1.5pc/[dd] & \scalea{\gyoung(;;*,;+;*)} \ar[dr] & \scalea{\gyoung(;+;*,;+;*)}  \ar@/^1.5pc/[dd]  \\
 & \scalea{\gyoung(;;+,;;0)} & & \scalea{\gyoung(;+;*,;0;*)} \\
  & \scalea{\gyoung(;;+,;;-)} & & \scalea{\gyoung(;+;*,;-;*)}
 }
\]
A concatenation of more than one such diagram will mean the extension closure of the categories involved; for example, the concatenation
\[
\xymatrix @-2.3pc{
\scalea{\gyoung(;;,;;+)} & \scalea{\gyoung(;;+,;;+)} \\
& \scalea{\gyoung(;;+,;;0)}
}
\]
is the extension closure of all slope semistable fiber sheaves of slope at least zero (including sheaves supported in dimension zero, which are slope semistable fiber sheaves of slope $+\infty$).

The category $\Coh^{\leq d}(X)$ for any integer $d \geq 0$, as well as $\{\Coh^{\leq 0}\}^\uparrow$ and $W_{0,\whPhi}$ are all torsion classes in $\Coh (X)$.  From \ref{para:torsionpairtilting}, each of these torsion classes determines a tilt of $\Coh (X)$, and hence determines a t-structure on $D^b(X)$. For instance, we have the torsion pairs $(W_{0,\whPhi}, W_{1,\whPhi})$ and $(\Coh^{\leq d}(X), \Coh^{\geq d+1}(X))$ in $\Coh (X)$.

\subparagraph[Slope-like functions]  Suppose \label{para:slopelikefunctions} $\Ac$ is an abelian category.  We call a function $\mu$ on $\Ac$ a slope-like function if $\mu$ is defined by
\[
  \mu (F) = \begin{cases} \frac{C_1(F)}{C_0(F)} &\text{ if $C_0(F) \neq 0$} \\
  +\infty &\text{ if $C_0(F)=0$} \end{cases}
\]
where $C_0, C_1 : K(\Ac) \to \mathbb{Z}$ are a pair of group homomorphisms  satisfying: (i) $C_0(F) \geq 0 $ for any $F \in \Ac$; (ii) if $F \in \Ac$ satisfies $C_0(F)=0$, then $C_1 (F) \geq 0$.  The additive group $\mathbb{Z}$ in the definition of a slope-like function can be replaced by any  discrete additive subgroup of $\mathbb{R}$.  Whenever $\Ac$ is a noetherian abelian category, every slope-like function possesses the Harder-Narasimhan property \cite[Section 3.2]{LZ2}; we will then say an object $F \in \Ac$ is $\mu$-stable (resp.\ $\mu$-semistable) if, for every short exact sequence $0 \to M \to F \to N \to 0$ in $\Ac$ where $M,N \neq 0$, we have $\mu (M) < (\text{resp.} \leq \, )\, \mu (N)$.

\subparagraph[Slope stability] Suppose \label{para:muomegaBslopefctndefn} $X$ is a smooth projective surface with a fixed ample divisor   $\omega$ and a fixed divisor $B$.  For any coherent sheaf $E$ on $X$, we  define
\[
  \mu_{\omega,B} (E) = \begin{cases}
  \frac{\omega \ch_1^B (E)}{\ch_0^B(E)} &\text{ if $\ch_0^B(E) \neq 0$} \\
  +\infty &\text{ if $\ch_0^B(E)=0$}\end{cases}.
\]
A coherent sheaf $E$ on $X$ is said to be $\mu_{\omega,B}$-stable or slope stable (resp.\ $\mu_{\omega,B}$-semistable or slope semistable) if, for every short exact sequence in $\Coh (X)$ of the form
\[
0 \to M \to E \to N \to 0
\]
where $M,N \neq 0$, we have $\mu_{\omega,B} (M) < (\text{resp.}\, \leq) \, \mu_{\omega,B} (N)$.  Note that for any  coherent sheaf $M$ on $X$ with $\ch_0(M) \neq 0$,  we have
\[
\mu_{\omega,B} (M) = \frac{\omega \ch_1^B(M)}{\ch_0(M)} = \frac{\omega\ch_1(M)-\omega B\ch_0(M)}{\ch_0(M)} = \mu_\omega (M) -\omega B.
\]
Hence $\mu_{\omega,B}$-stability is equivalent to $\mu_\omega$-stability for  coherent sheaves.  When $B=0$, we often write $\mu_\omega$ for $\mu_{\omega,B}$.

\subparagraph[Bridgeland stability conditions on surfaces] Suppose \label{para:Bridgestabonsurfaces} $X$ is a smooth projective surface.  For any ample divisor $\omega$ on $X$, we can define the following subcategories of $\Coh (X)$
\begin{align*}
  \Tc_\omega &= \langle E \in \Coh (X) :  E \text{ is $\mu_\omega$-semistable}, \, \mu_\omega (E) > 0\rangle, \\
  \Fc_\omega &= \langle E \in \Coh (X) : E \text{ is $\mu_\omega$-semistable}, \, \mu_\omega (E) \leq 0 \rangle.
\end{align*}
Since the slope function $\mu_\omega$ has the Harder-Narasimhan property, the pair $(\Tc_\omega,\Fc_\omega)$ is a torsion pair in $\Coh (X)$.  The extension closure
\[
  \Bc_\omega = \langle \Fc [1], \Tc \rangle
\]
in $D^b(X)$ is thus  a tilt of the heart $\Coh (X)$, i.e.\ $\Bc_\omega$ is  the heart of a bounded t-structure on $D^b(X)$ and is an abelian subcategory of $D^b(X)$.   If we set
\begin{equation}\label{eq:Zwformula}
  Z_\omega (F) = -\int_X e^{-i\omega}\ch(F) = -\ch_2(F) + \tfrac{\omega^2}{2}\ch_0(F) + i\omega \ch_1(F),
\end{equation}
then the pair $(\Bc_\omega, Z_\omega)$ gives a Bridgeland stability condition on $D^b(X)$, as shown by  Arcara-Bertram  in \cite{ABL}.    In particular, for any nonzero object $F \in \Bc_\omega$, the complex number $Z_\omega (F)$ lies in the upper-half complex plane (that includes the negative real axis)
\[
  \mathbb{H} = \{ re^{i\pi \phi} : r>0, \phi \in (0,1] \}.
\]
This allows us to define the phase $\phi (F)$ of any nonzero object $F \in \Bc_\omega$ using the relation
\[
  Z_\omega (F) \in \mathbb{R}_{>0}\cdot e^{i \pi \phi (F)}  \text{\quad where $\phi (F) \in (0,1]$}.
\]
We then say an object $F \in \Bc_\omega$ is $Z_\omega$-stable (resp.\ $Z_\omega$-semistable) if, for all $\Bc_\omega$-short exact sequence
\[
0 \to M \to F \to N \to 0
\]
where $M, N \neq 0$, we have $\phi (M) < \phi (N)$ (resp.\ $\phi (M) \leq \phi (N)$).

\paragraph[The N\'{e}ron-Severi group $\mathrm{NS}(X)$] Since \label{para:NSX} our elliptic fibration $p$ is assumed to be Weierstra{\ss}, there exists a section, and the Picard rank of $X$ is finite by the Shioda-Tate formula \cite[VII 2.4]{MirLec}. Also,  the N\'{e}ron-Severi group $\mathrm{NS}(X)$ is generated by the fiber class $f$ and a finite number of sections $\Theta_0 := \Theta, \Theta_1, \cdots, \Theta_r$ for some $r \geq 0$ \cite[VII 2.1]{MirLec}.

\paragraph[The cohomological Fourier-Mukai transforms] For  \label{para:cohomFMTformulas} any $E \in D^b(X)$, if we let
\begin{align}
  n &= \ch_0(E), \notag\\
  d = f\ch_1(E)&, \,\,  c = \Theta\ch_1(E), \notag\\
  s &= \ch_2(E), \label{eq:chEnotation}
\end{align}
then from the cohomological Fourier-Mukai transform in   \cite[(6.21)]{FMNT} we have
\begin{align}
\ch_0 (\Phi E) &= d, \notag\\
\ch_1 (\Phi E) &= -\ch_1(E) + dp^\ast \bar{K} + (d-n)\Theta + (c-\tfrac{1}{2}ed + s)f, \notag\\
\ch_2 (\Phi E) &= (-c-de + \tfrac{1}{2}ne). \label{eq:cohomFMT-Phi}
\end{align}
where $\Theta^2 = -e$ and $\bar{K} = c_1 (p_\ast \omega_{X/B})$.  Since $p^\ast \bar{K} \equiv ef$, we have $ \ch_1(\Phi E)\cdot f =  -n$ and $\ch_1(\Phi E)\cdot \Theta =  (s-\tfrac{e}{2}d)+ne$.
In particular, for any $m \in \mathbb{R}$ we have
\begin{align}
\ch_1(\Phi E)\cdot f &=  -n,\notag\\
\ch_1(\Phi E)\cdot (\Theta + mf) &= s-\tfrac{e}{2}d + (e-m)n. \label{eq:cohomFMT-Phi-ch1}
\end{align}  On the other hand, from \cite[(6.22)]{FMNT} we have
\begin{align}
  \ch_0 (\wh{\Phi}E) &= d \notag\\
  \ch_1 (\wh{\Phi}E) &= \ch_1(E) - np^\ast \bar{K} - (d+n)\Theta + (s+en-c-\frac{e}{2}d)f, \notag \\
  \ch_2 (\wh{\Phi}(E)) &= -(c+de+\tfrac{e}{2}n). \label{eq::cohomFMT-hatPhi}
\end{align}
This gives $\ch_1 (\wh{\Phi} E)\cdot f = -n$ and $\ch_1 (\wh{\Phi}E)\cdot \Theta = s+\tfrac{e}{2}d + ne$.  In particular, for any $m \in \mathbb{R}$ we have
 \begin{align}
  \ch_1 (\wh{\Phi} E)\cdot f &= -n, \notag\\
  \ch_1 (\wh{\Phi} E) \cdot (\Theta + mf) &= s+\tfrac{e}{2}d+(e-m)n.  \label{eq::cohomFMT-hatPhi-ch1}
\end{align}

\paragraph[Some intersection numbers] Here \label{para:basicintersectionnumbers} we collect some intersection numbers that will be used throughout the rest of the paper.  For any $m \in \RR$ we have
\begin{equation*}
  (\Theta +mf)^2 = \Theta^2 + 2m = 2m-e.
\end{equation*}
Recall that for  any section $\Theta$ of the fibration $p$, the divisor $\Theta + mf$ on $X$ is  ample for $m \gg 0$ \cite[Proposition 1.45]{KM}. We will often work with a polarisation of the form
\begin{equation}\label{eq:omeganotation}
  \omega = u(\Theta + mf) + vf
\end{equation}
for some $u, v \in \mathbb{R}$, which gives
\begin{equation*}
  \tfrac{\omega^2}{2} = (m-\tfrac{e}{2})u^2 + uv.
\end{equation*}
As a result, if we use the notation for $\ch(E)$ in \eqref{eq:chEnotation} then $(\Theta + mf) \ch_1 (E) = c + md$ and
\begin{align*}
\omega \ch_1 (E) &=  (u(\Theta + mf)+vf)\ch_1(E) \\
&= uc + (um+v)d.
\end{align*}
If we also set
\[
 \oo = a(\Theta + mf)+bf
\]
 where $a, b \in \RR$ and fix $B = \tfrac{e}{2}f$ then
\begin{align*}
  \oo \ch_1^B(E) &=  \oo (\ch_1(E) - \tfrac{e}{2}f\ch_0(E)) \\
  &= a(c-\tfrac{e}{2}n) + (am+b)d.
\end{align*}
As a result, when $\oo$ is an ample divisor on $X$, we can write the twisted slope function $\mu_{\oo,B}$ as
\begin{equation}\label{eq:muBbbE}
  \mu_{\oo,B}(E) = \tfrac{1}{n} (a(c-\tfrac{e}{2}n) + (am+b)d).
\end{equation}
On the other hand, when $\omega$ is an ample divisor on $X$, with respect to the central charge \eqref{eq:Zwformula} we have
\begin{align}
  Z_\omega (\Phi E [1]) &=  \ch_2(\Phi E)  -\tfrac{\omega^2}{2}\ch_0(\Phi E) - i\omega \ch_1(\Phi E) \notag\\
  &= (-c-de+\tfrac{e}{2}n) - ((m-\tfrac{e}{2})u^2+uv)d - i \left( u ( s-\tfrac{e}{2}d+(e-m)n) - vn \right)  \text{from \ref{eq:cohomFMT-Phi-ch1}} \notag\\
  &= (-c + \tfrac{e}{2}n) - ( (m-\tfrac{e}{2})u^2 + uv +e)d + i \left( u (-( s-\tfrac{e}{2}d)+(m-e)n)+vn\right). \label{eq:ZwPhiEshift}
\end{align}

\paragraph[Heuristics] Comparing \label{para:heuristics} the coefficients of the characteristic classes $(c-\tfrac{e}{2}n)$ and $d$ in the  expressions for $\mu_{\oo,B} (E)$ and $Z_\omega (\Phi E [1])$, we see that for fixed $m, a, b>0$, if $v \to \infty$ along the curve
\[
  \frac{am+b}{a} = (m-\tfrac{e}{2})u^2+uv+e
\]
i.e.\
\begin{equation*}
  m + \tfrac{b}{a} = (m-\tfrac{e}{2})u^2 + uv +e
\end{equation*}
then $\oo \ch_1^B(E)$ is a negative scalar multiple of $\Re Z_\omega (\Phi E [1])$, while $\Im Z_\omega (\Phi E [1])$ is dominated by a positive scalar multiple of $\ch_0(E)$.  This suggests that  for $v \gg 0$, $\mu_{\oo,B}$-stability for $E$ should be an `approximation' of  $Z_\omega$-stability up to the Fourier-Mukai transform $\Phi$, or that $Z_\omega$-stability is a `refinement' of $\mu_{\oo,B}$-stability for $E$ up to $\Phi$.  We will make this idea precise in the remainder of this paper.  The computation above also motivates us to consider the change of variables
\begin{equation*}
  \lambda = b, \,\,\alpha = \tfrac{b}{a}
\end{equation*}
so that $\oo$ can be written as
\begin{equation}\label{eq:oonotation}
  \oo = \tfrac{\lambda}{\alpha} (\Theta + mf) + \lambda f.
\end{equation}
Then  $\mu_{\oo,B}$-stability depends only on $\alpha$ and not $\lambda$, and we  can think of $\mu_{\oo,B}$-stability as being approximated by $Z_\omega$-stability as $v \to \infty$ along the curve
\begin{equation}\label{eq:vhyperbolaequation}
 m + \alpha = (m-\tfrac{e}{2})u^2 + uv +e.
\end{equation}

\paragraph[Decomposing $\mu_\omega$] Suppose $F$ is an object in $D^b(X)$.  With $\omega$ as in \eqref{eq:omeganotation},  we can rewrite $\mu_\omega (F)$ as
\begin{align}
  \mu_\omega (F) = \frac{\omega \ch_1(F)}{\ch_0(F)} &= u \frac{(\Theta + mf)\ch_1(F)}{\ch_0(F)}+ v \frac{f\ch_1(F)}{\ch_0(F)} \notag\\
  &= u \mu_{\Theta + mf}(F) + v \mu_f (F). \label{eq:muwdecomposition}
\end{align}
Recall that the divisor $\Theta + mf$ is ample on $X$ for $m \gg 0$ while $f$ is a nef divisor on $X$. Therefore,  both  $\mu_{\Theta + mf}$ and $\mu_f$ are `slope-like' functions with the Harder-Narasimhan property (see \ref{para:slopelikefunctions}).

\paragraph For \label{para:ch1realZwcomparison} fixed $\lambda, \alpha >0$, with  $\oo$ as in \eqref{eq:oonotation}, $\omega$ as in \eqref{eq:omeganotation}, and $u,v>0$ under the constraint \eqref{eq:vhyperbolaequation}, we have the following observation that will be useful later on: with the same notation for $\ch(E)$ as in \ref{para:basicintersectionnumbers}, for the $B$-field $B=\tfrac{e}{2}f$ we have
\begin{align}
  \oo \ch_1^B (E) &= \oo (\ch_1(E) - B \ch_0(E)) \notag\\
  &= \tfrac{\lambda}{\alpha} ( (c-\tfrac{e}{2}n) + (m + \alpha)d) \notag\\
  &= - \tfrac{\lambda}{\alpha} \Re Z_\omega (\Phi E [1]). \label{eq:ooch1twistReZePhiE1relation}
\end{align}
In particular, if $F$ is a $\whPhi$-WIT$_1$ sheaf on $X$ of nonzero rank  with $f\ch_1(F)=0$, then $\wh{F} = \whPhi F[1]$ is a sheaf supported in dimension 1, implying $\oo \ch_1^B(\wh{F}) = \oo \ch_1 (\wh{F})>0$.  Then
\[
  \Re Z_\omega (F) = \Re Z_\omega (\Phi \wh{F})=-\Re Z_\omega (\Phi \wh{F}[1])= \tfrac{\alpha}{\lambda}\oo \ch_1 (\wh{F})>0.
\]


\section{Constructing a limit Bridgeland stability}\label{sec:limitBridgelandconstr}

Since the Bridgeland stability condition $(\Bc_\omega, Z_\omega)$ on $X$ depends on $\omega$, varying $\omega$ will change the stability condition accordingly (see \ref{para:Bridgestabonsurfaces}).  In this section, we will show that when $\omega$ is written in the form
\[
\omega = u(\Theta + mf) + vf
\]
and $v \to \infty$ subject to the constraint \eqref{eq:vhyperbolaequation}, we obtain a notion of stability with the Harder-Narasimhan property, which can be considered as a `limit Bridgeland stability'.

Due to the symmetry between $\Phi$ and $\whPhi$, all the results involving $\Phi$ and $\whPhi$ in this section and beyond still hold if we interchange $\Phi$ and $\whPhi$ (except for explicit computations involving Chern classes, since the cohomological Fourier-Mukai transforms corresponding to $\Phi$ and $\whPhi$ are different - see \ref{para:cohomFMTformulas}).

For the rest of this section, let us fix an $m >0$ so that $\Theta + kf$ is ample for all $k \geq m$.  We will write $\omega$ in the form \eqref{eq:omeganotation} with $u,v>0$.


\begin{lem}\label{lem:TclFcldefinitions}
Suppose $u_0>0$ and $F \in \Coh (X)$.
\begin{itemize}
\item[(1)] The following are equivalent:
  \begin{itemize}
  \item[(a)] There exists $v_0>0$ such that $F \in \Fc_\omega$ for all $(v,u) \in (v_0,\infty) \times (0,u_0)$.
  \item[(b)] There exists $v_0>0$ such that, for every nonzero subsheaf $A \subseteq F$, we have $\mu_\omega (A) \leq 0$ for all $(v,u) \in (v_0,\infty)\times (0,u_0)$.
  \item[(c)] For every nonzero subsheaf $A \subseteq F$, either (i) $\mu_f (A) < 0$, or (ii) $\mu_f (A)=0$ and also $\mu_{\Theta + mf}(A) \leq 0$.
  \end{itemize}
\item[(2)] The following are equivalent:
  \begin{itemize}
  \item[(a)] There exists $v_0>0$ such that $F \in \Tc_\omega$ for all $(v,u) \in (v_0,\infty) \times (0,u_0)$.
  \item[(b)]  There exists $v_0>0$ such that, for every nonzero sheaf quotient $F \twoheadrightarrow A$, we have $\mu_\omega (A) > 0$ for all $(v,u) \in (v_0,\infty) \times (0,u_0)$.
  \item[(c)] For any nonzero sheaf quotient $F \twoheadrightarrow A$, either (i) $\mu_f (A)>0$, or (ii) $\mu_f(A)=0$ and $\mu_{\Theta + mf}(A) >0$.
  \end{itemize}
\end{itemize}
\end{lem}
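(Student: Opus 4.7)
The plan is to exploit the decomposition $\mu_\omega(F) = u\,\mu_{\Theta+mf}(F) + v\,\mu_f(F)$ from \eqref{eq:muwdecomposition}, together with the Harder--Narasimhan property of the slope-like functions $\mu_{\Theta+mf}$ and $\mu_f$ noted in \ref{para:slopelikefunctions}. For part (1), the equivalence (a)$\Leftrightarrow$(b) follows directly from the HN property of $\mu_\omega$: $F \in \Fc_\omega$ is equivalent to $\mu_\omega^{\max}(F) \leq 0$, which in turn is equivalent to $\mu_\omega(A) \leq 0$ for every nonzero subsheaf $A \subseteq F$, so the same $v_0$ works in both formulations. For (b)$\Rightarrow$(c), I fix a nonzero subsheaf $A$ and analyse the inequality $u\,\mu_{\Theta+mf}(A) + v\,\mu_f(A) \leq 0$: sending $v \to \infty$ rules out $\mu_f(A) > 0$ (including $+\infty$ for torsion subsheaves), and specialising to $\mu_f(A) = 0$ and varying $u \in (0,u_0)$ forces $\mu_{\Theta+mf}(A) \leq 0$.

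The main step is (c)$\Rightarrow$(a), where the genuine difficulty is \emph{uniformity} of $v_0$ across infinitely many subsheaves: naively, one worries about subsheaves with $\mu_f(A)<0$ arbitrarily close to $0$ combined with possibly large $\mu_{\Theta+mf}(A)$. The resolution will rely on two uniform bounds. First, the discreteness of fiber degrees together with a rank bound: for a nonzero subsheaf $A \subseteq F$ with $\mu_f(A) < 0$, we have $f\ch_1(A) \in \ZZ_{\leq -1}$ and $\ch_0(A) \leq \ch_0(F)$, forcing $\mu_f(A) \leq -1/\ch_0(F)$. Second, the HN property of the ample slope $\mu_{\Theta+mf}$ gives $\mu_{\Theta+mf}(A) \leq \mu_{\Theta+mf}^{\max}(F) =: M$, a bound depending only on $F$. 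Setting $v_0 = u_0 \max\{0,M\}\cdot\ch_0(F)$, the estimate $\mu_\omega(A) \leq u_0 M - v/\ch_0(F) \leq 0$ then holds simultaneously for all such $A$ and all $v \geq v_0$; the case $\mu_f(A) = 0$ is immediate from (c) since $\mu_\omega(A) = u\,\mu_{\Theta+mf}(A) \leq 0$. (If $\ch_0(F) = 0$, then $F$ is torsion, (c) already fails on $A = F$, and the implication holds vacuously.)

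Part (2) will be entirely dual, with subsheaves replaced by quotients. Torsion quotients satisfy $\mu_\omega(A) = +\infty > 0$ automatically; for a torsion-free quotient $A$ with $\mu_f(A) > 0$, the same integrality argument yields $\mu_f(A) \geq 1/\ch_0(F)$, and the HN property of $\mu_{\Theta+mf}$ now provides the lower bound $\mu_{\Theta+mf}(A) \geq \mu_{\Theta+mf}^{\min}(F)$. The principal obstacle throughout is thus the uniformity of $v_0$ in (c)$\Rightarrow$(a) and its dual, which in both cases is resolved by the same trio of ingredients: integrality of fiber degrees, the rank bound inherited from $F$, and the Harder--Narasimhan property for the ample slope $\mu_{\Theta+mf}$.
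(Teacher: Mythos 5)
Your argument is correct and takes essentially the same route as the paper, whose proof simply defers to \cite[Lemmas 4.1 and 4.3]{Lo14} with the slope-like function $\mu^\ast$ there replaced by $\mu_{\Theta+mf}$. The decomposition \eqref{eq:muwdecomposition} combined with integrality of $f\ch_1$, the rank bound $\ch_0(A)\leq\ch_0(F)$, and the Harder--Narasimhan property of $\mu_{\Theta+mf}$ is exactly the intended mechanism for securing a $v_0$ uniform over all subobjects (resp.\ quotients), so your write-up supplies precisely the details the paper leaves to the reference.
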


\begin{proof}
The proofs for parts (1) and (2) are essentially the same as those for \cite[Lemma 4.1]{Lo14} and \cite[Lemma 4.3]{Lo14}, respectively, if we replace the slope-like function $\mu^\ast$ in those proofs by $\mu_{\Theta + mf}$.
\end{proof}

\paragraph[A limit of the heart $\Bc_\omega$] We \label{para:TlFlproperties} now define the following subcategories of $\Coh (X)$:
\begin{itemize}
\item $\Tc^l$, the extension closure of all coherent sheaves satisfying condition (2)(c) in Lemma \ref{lem:TclFcldefinitions}.
\item $\Fc^l$,  the extension closure of all coherent sheaves satisfying condition (1)(c) in Lemma \ref{lem:TclFcldefinitions}.
\end{itemize}
We also define the extension closure in $D^b(X)$
\[
  \Bc^l = \langle \Fc^l [1], \Tc^l \rangle.
\]
Following an argument as in the proof of Lemma \ref{lem:TclFcldefinitions}, it is easy to check that the categories $\Tc^l, \Fc^l$ can equivalently be defined as
\begin{align*}
  \Tc^l &= \{ F \in \Coh (X): F \in \Tc_\omega \text{ for all $v \gg 0$ along \eqref{eq:vhyperbolaequation} } \} \\
  \Fc^l &= \{ F \in \Coh (X): F \in \Fc_\omega \text{ for all $v \gg 0$ along \eqref{eq:vhyperbolaequation} }\}.
\end{align*}
The following immediate properties  are analogous to those  in \cite[Remark 4.4]{Lo14}:
\begin{itemize}
\item[(i)] $\Coh^{\leq 1}(X) \subset \Tc^l$ since  all the torsion sheaves are contained in $\Tc_\omega$, for any ample divisor $\omega$.
\item[(ii)] $\Fc^l \subset \Coh^{=2}(X)$ since every object in $\Fc_\omega$ is a torsion-free sheaf, for any ample divisor $\omega$.
\item[(iii)] $W_{0,\whPhi} \subset \Tc^l$ by the same argument as in \cite[Remark 4.4(iii)]{Lo14}.
\item[(iv)] $f\ch_1(F) \geq 0$ for every $F \in \Bc^l$.  This is clear from the definition of $\Bc^l$ and Lemma \ref{lem:TclFcldefinitions}.  Lemma \ref{lem:TlFlaretorsionpair} below shows that $\Bc^l$ is the heart of a t-structure on $D^b(X)$, and hence an abelian category.  The subcategory
    \[
    \Bl_0 := \{ F \in \Bl : f\ch_1(F)=0 \}
    \]
    is then a Serre subcategory of $\Bl$.
\item[(v)] $\Fc^l \subset W_{1,\whPhi}$.  This follows from (iii) and Lemma \ref{lem:TlFlaretorsionpair} below.
\end{itemize}

\begin{lem}\label{lem:TlFlaretorsionpair}
The pair $(\Tc^l, \Fc^l)$ forms a torsion pair in $\Coh (X)$, and the category $\Bc^l$ is the heart of a bounded t-structure on $D^b(X)$.
\end{lem}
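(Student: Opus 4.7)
The plan is to apply Polishchuk's lemma (\ref{para:torsionpairtilting}) to the noetherian category $\Coh(X)$: if $\Tc^l$ is closed under extensions and quotients in $\Coh(X)$, then $(\Tc^l, (\Tc^l)^\circ)$ is automatically a torsion pair in $\Coh(X)$, and it remains to identify $(\Tc^l)^\circ$ with $\Fc^l$. Extension-closure of $\Tc^l$ is built into its definition. For closure under quotients, I would use the alternate characterization $\Tc^l = \{F \in \Coh(X) : F \in \Tc_\omega \text{ for all } v \gg 0 \text{ along } \eqref{eq:vhyperbolaequation}\}$ noted in \ref{para:TlFlproperties}, together with the fact that each $\Tc_\omega$ is a torsion class in $\Coh(X)$ and hence closed under quotients there.

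The easy inclusion $\Fc^l \subseteq (\Tc^l)^\circ$ will follow from the same alternate characterization: for any $T \in \Tc^l$ and $F \in \Fc^l$ one can find a common $v_0$ so that $T \in \Tc_\omega$ and $F \in \Fc_\omega$ for all $v > v_0$ along \eqref{eq:vhyperbolaequation}, which forces $\Hom(T, F) = 0$ by the torsion pair property of $(\Tc_\omega, \Fc_\omega)$. The reverse inclusion $(\Tc^l)^\circ \subseteq \Fc^l$ is the main obstacle, and the plan is to prove it contrapositively: given $G \in \Coh(X)$ with $G \notin \Fc^l$, I would exhibit a nonzero subsheaf of $G$ lying in $\Tc^l$, contradicting $G \in (\Tc^l)^\circ$. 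By the characterization (1)(c) in Lemma \ref{lem:TclFcldefinitions}, there must exist a subsheaf $A \subseteq G$ with either $\mu_f(A) > 0$, or $\mu_f(A) = 0$ together with $\mu_{\Theta + mf}(A) > 0$.

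The case $\mu_f(A) > 0$ is immediate: the first $\mu_f$-HN factor $A_1 \subseteq A$ is $\mu_f$-semistable with $\mu_f(A_1) > 0$, and every $\Coh(X)$-quotient of $A_1$ then has $\mu_f > 0$ by semistability, so $A_1$ satisfies condition (2)(c) of Lemma \ref{lem:TclFcldefinitions} and lies in $\Tc^l$. The delicate case is $\mu_f(A) = 0$ with $\mu_{\Theta + mf}(A) > 0$; here the strict monotonicity of HN slopes forces $A$ itself to be $\mu_f$-semistable of slope zero and in particular torsion-free, since a torsion subsheaf would have $\mu_f = +\infty$. My plan is then to introduce the full subcategory $\mathcal{C}_0 \subseteq \Coh(X)$ of $\mu_f$-semistable sheaves of $\mu_f$-slope zero (together with the zero object), and verify that $\mathcal{C}_0$ is a noetherian abelian subcategory of $\Coh(X)$ whose kernels, images and extensions taken in $\Coh(X)$ remain in $\mathcal{C}_0$, using additivity of $f\ch_1$ and the $\mu_f$-semistability of the ambient sheaf. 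Letting $A_1$ be the first $\mu_{\Theta + mf}$-HN factor of $A$ inside $\mathcal{C}_0$, the technical core, and the main obstacle I foresee, is checking that every $\Coh(X)$-quotient $A_1 \twoheadrightarrow B$ still satisfies condition (2)(c). A case analysis on whether $B$ has positive rank and, if so, on the $\mu_f$-slope of the kernel $K \subseteq A_1$, should reduce matters to the $\mu_{\Theta + mf}$-semistability of $A_1$ inside $\mathcal{C}_0$, using also that any torsion contribution to $B$ in the subcase $\mu_f(B) = 0$ is automatically supported in fibers and contributes nonnegatively to $(\Theta + mf)\ch_1(B)$.

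Once $(\Tc^l, \Fc^l)$ has been established as a torsion pair in $\Coh(X)$, the standard tilting construction recalled in \ref{para:torsionpairtilting} immediately gives that $\Bc^l = \langle \Fc^l[1], \Tc^l \rangle$ is the heart of a bounded t-structure on $D^b(X)$.
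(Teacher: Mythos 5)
Your proposal is correct, and its overall architecture is the same as that of the argument the paper defers to: the paper's proof of this lemma simply cites \cite[Lemma 4.6]{Lo14} (with $\mu^{\ast}$ replaced by $\mu_{\Theta+mf}$), and that argument likewise rests on Polishchuk's lemma applied to the extension- and quotient-closed class $\Tc^l$ in the noetherian category $\Coh(X)$, followed by the identification $(\Tc^l)^{\circ}=\Fc^l$. Where you genuinely diverge is in how that identification is carried out. The paper's proof explicitly flags, as its one needed input, the Fourier--Mukai fact that $f\ch_1\geq 0$ on $W_{0,\whPhi}$ and $f\ch_1\leq 0$ on $W_{1,\whPhi}$; the cited argument thus routes part of the verification through the WIT decomposition of a sheaf. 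Your route is purely slope-theoretic: you produce a destabilising subobject in $\Tc^l$ of any $G\notin\Fc^l$ by running the $\mu_f$-HN filtration and then the $\mu_{\Theta+mf}$-HN filtration inside the abelian category $\mathcal{C}_0$ of $\mu_f$-semistable sheaves of slope zero. This is a sound and arguably more self-contained substitute (note that $\mu_{\Theta+mf}$ is integer-valued on $\mathcal{C}_0$ since $f\ch_1=0$ there, so the HN property you invoke is available even for irrational $m$), and your case analysis for quotients of the resulting $A_1$ does close up as you anticipate. Two small points to make explicit when writing it up: first, you need that the class of sheaves satisfying condition (1)(c) of Lemma \ref{lem:TclFcldefinitions} is already closed under extensions, so that $\Fc^l$ coincides with that class and $G\notin\Fc^l$ really yields a subsheaf violating (1)(c); this follows from additivity of $f\ch_1$ and $(\Theta+mf)\ch_1$ across short exact sequences. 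Second, in the case $\mu_f(A)=0$ with $\mu_{\Theta+mf}(A)>0$, the conclusion that $A$ is $\mu_f$-semistable is not forced by ``strict monotonicity of HN slopes'' alone but by the prior elimination of the case in which some subsheaf of $G$ has $\mu_f>0$; your case ordering supplies this, but the sentence as written overstates the reason.
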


\begin{proof}
By \cite[Lemma 2.5]{Lo7}, we have
\[
  \begin{cases}
  f\ch_1(F) \geq 0 &\text{ if $F \in W_{0,\whPhi}$} \\
  f\ch_1(F) \leq 0 &\text{ if $F \in W_{1,\whPhi}$}
  \end{cases}.
\]
Armed with this observation,  the  argument  in the proof of \cite[Lemma 4.6]{Lo14} applies if we replace  $\mu^\ast$ by $\mu_{\Theta + mf}$ in that proof.
\end{proof}

Let $\mathbb{H}$ denote the strict upper-half complex plane (including the negative real axis)
\[
  \mathbb{H} = \{ re^{i\pi \phi} : r>0, \phi \in (0,1] \}.
\]

\begin{lem}\label{lem:ZwtakesBwupperplane}
Fix any $\alpha >0$.  For any nonzero $F \in \Bc^l$, we have $Z_\omega (F) \in \mathbb{H}$ as $v \to \infty$ along the curve \eqref{eq:vhyperbolaequation}.
\end{lem}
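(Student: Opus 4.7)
The plan is to reduce the statement to the fact, due to Arcara--Bertram \cite{ABL} and recalled in \ref{para:Bridgestabonsurfaces}, that $(\Bc_\omega, Z_\omega)$ is a Bridgeland stability condition on $D^b(X)$ for each ample $\omega$. Concretely, the key point is the alternative characterisation of $\Tc^l$ and $\Fc^l$ recorded in \ref{para:TlFlproperties}: an object $F \in \Coh(X)$ lies in $\Tc^l$ (resp.\ $\Fc^l$) iff $F \in \Tc_\omega$ (resp.\ $\Fc_\omega$) for all $v \gg 0$ along the curve \eqref{eq:vhyperbolaequation}.

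First I would fix a nonzero $F \in \Bc^l$. By definition of $\Bc^l = \langle \Fc^l[1], \Tc^l\rangle$ and Lemma \ref{lem:TlFlaretorsionpair}, $F$ sits in a triangle
\[
 \mathcal{H}^{-1}_{\Bl}(F)[1] \to F \to \mathcal{H}^0_{\Bl}(F) \to \mathcal{H}^{-1}_{\Bl}(F)[2],
\]
where (using the abelian structure of $\Bl$) the object $\mathcal{H}^{-1}_{\Bl}(F)$ is the $H^{-1}$-cohomology sheaf lying in $\Fc^l$ and $\mathcal{H}^0_{\Bl}(F)$ is the $H^0$-cohomology sheaf lying in $\Tc^l$. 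By the alternative characterisation above, there exists $v_0 > 0$ such that for all $v > v_0$ along the curve \eqref{eq:vhyperbolaequation} we have $\mathcal{H}^{-1}_{\Bl}(F) \in \Fc_\omega$ and $\mathcal{H}^0_{\Bl}(F) \in \Tc_\omega$ simultaneously; hence $F$ belongs to the tilted heart $\Bc_\omega$ for all such $v$.

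The Arcara--Bertram theorem then gives $Z_\omega(F) \in \mathbb{H}$ for each such $v$, which is precisely the claim. Since the proof is essentially a reduction, I do not expect a serious obstacle; the only point requiring mild care is to confirm that the alternative description of $\Tc^l$ and $\Fc^l$ as ``limits'' of $\Tc_\omega, \Fc_\omega$ respects extensions, so that a single $v_0$ works for both cohomology sheaves of $F$ — this follows from the fact that both $\Tc_\omega$ and $\Fc_\omega$ are closed under extensions in $\Coh(X)$, together with the finite length of the defining extension filtrations of $\mathcal{H}^{-1}_{\Bl}(F) \in \Fc^l$ and $\mathcal{H}^0_{\Bl}(F) \in \Tc^l$.
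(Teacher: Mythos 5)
Your proposal is correct and follows essentially the same route as the paper: the paper likewise reduces the claim to the Arcara--Bertram fact that $Z_\omega$ maps nonzero objects of $\Bc_\omega$ into $\mathbb{H}$, combined with the characterisations of $\Tc^l$ and $\Fc^l$ (Lemma \ref{lem:TclFcldefinitions} and \ref{para:TlFlproperties}) showing that any fixed nonzero $F \in \Bc^l$ lies in $\Bc_\omega$ for all $v \gg 0$ along the curve. Your extra remark about extracting a single $v_0$ from the two cohomology sheaves is a detail the paper leaves implicit, and it is handled correctly.
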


\begin{proof}
Part of the proof of $(\Bc_\omega, Z_\omega)$ being a Bridgeland stability condition on $D^b(X)$ \cite[Corollary 2.1]{ABL} asserts that $Z_\omega (F) \in \mathbb{H}$ for any nonzero object $F \in \Bc_\omega$.  This lemma thus follows from the characterisations of $\Tc^l, \Fc^l$ in Lemma \ref{lem:TclFcldefinitions}.
\end{proof}

\paragraph[$Z^l$-stability, limit Bridgeland stability] We can now define a `limit Bridgeland stability' as follows.  By Lemma \ref{lem:ZwtakesBwupperplane}, for any nonzero object $F \in \Bl$ we know that $Z_\omega (F)$ lies in the upper half plane $\mathbb{H}$ for $v \gg 0$ subject to the constraint \eqref{eq:vhyperbolaequation}, i.e.\
\[
 m + \alpha = (m-\tfrac{e}{2})u^2 + uv +e.
\]
We can then define a function germ $\phi (F) : \RR \to (0,1]$ for $v \gg 0$ via the relation
\[
  Z_\omega (F) \in \mathbb{R}_{>0}\cdot e^{i \pi \phi (F)(v)} \text{\quad for $v \gg 0$}.
\]
Although $u$ is only an implicit function in $v$ under the constraint \eqref{eq:vhyperbolaequation}, by requiring $u>0$ we can write $u$ as a function in $v$ for $v \gg 0$, in which case $O(u)=O(\tfrac{1}{v})$ as $v \to \infty$.  As a result, as $v \to \infty$, the function  $Z_\omega (F)$  is asymptotically equivalent to  a Laurent polynomial in $v$ over $\mathbb{C}$, allowing us to define a notion of stability as in the case of Bayer's polynomial stability \cite{BayerPBSC}: We say $F$ is $Z^l$-stable (resp.\ $Z^l$-semistable) if, for every $\Bl$-short exact sequence
\[
  0 \to M \to F \to N \to 0
\]
where $M,N \neq 0$, we have
\[
  \phi (M) < \phi (N) \text{ for $v \gg 0$}
\]
(resp.\ $\phi (M) \leq \phi (N)$ for $v \gg 0$).  We will usually write $\phi (M) \prec \phi (N)$ (resp.\ $\phi (M) \preceq \phi (N)$) to mean $\phi (M) < \phi (N)$ for $v \gg 0$ (resp.\ $\phi (M) \leq \phi (N)$ for $v \gg 0$).

\begin{rem}
If we make a change of variables via the `shear matrix'
\[
 \begin{pmatrix} v' \\ u' \end{pmatrix} = \begin{pmatrix} 1 & m-\tfrac{e}{2} \\ 0 & 1 \end{pmatrix} \begin{pmatrix} v \\ u \end{pmatrix}
\]
then the relation \eqref{eq:vhyperbolaequation} can be rewritten as
\[
  m + \alpha = u'v' + e
\]
while $\omega$ can be rewritten as $\omega = u'(\Theta + \tfrac{e}{2}f) + v'f$.  Then $Z_\omega (F)$ is a Laurent polynomial in $v'$, and $Z^l$-stability can equivalently be defined by letting $v' \to \infty$, in which case $Z^l$-stability is indeed a polynomial stability in the sense of Bayer.  Nonetheless, we will use the coordinates $(v,u)$ instead of $(v',u')$ in the rest of this article.
\end{rem}

\paragraph[Torsion triple and torsion quintuple in $\Bc^l$] We \label{para:Bltorsiontriplequintuple} now define the following subcategories of $\Tc^l, \Fc^l$
\begin{align*}
  \Tc^{l,+} &= \langle F \in \Coh^{=2}(X) : F \text{ is $\mu_f$-semistable}, \mu_f (F) > 0 \rangle, \\
  \Tc^{l,0} &= \{ F \in \Tc^l : F \text{ is $\mu_f$-semistable}, \mu_f (F)=0 \}, \\
  \Fc^{l,0} &= \{ F \in \Fc^l : F \text{ is $\mu_f$-semistable}, \mu_f (F) =0 \}, \\
  \Fc^{l,-} &= \langle F \in \Coh^{=2}(X) : F \text{ is $\mu_f$-semistable}, \mu_f (F) < 0 \rangle.
\end{align*}
For the same reason as in \cite[Remark 4.8(iii)]{Lo14}, we have the inclusion $\Tc^{l,0} \subset W_{1,\whPhi}$.  Since $W_{0,\whPhi} \subset \Tc^l$ from \ref{para:TlFlproperties}(iii), we have the torsion triple in $\Bl$
\begin{equation}\label{eq:Bltorsiontriple}
  (\Fc^l [1], \,\, W_{0,\whPhi},\,\, W_{1,\whPhi} \cap \Tc^l )
\end{equation}
which is an analogue of \cite[(4.12)]{Lo14}.  Also, by considering the $\mu_f$-HN filtrations of objects in $\Fc^l$ and $\Tc^l$, we obtain the torsion quintuple in $\Bc^l$
\begin{equation}\label{eq:Bltorsionquintuple}
  (\Fc^{l,0}[1],\,\, \Fc^{l,-}[1],\,\, \Coh^{\leq 1}(X),\,\, \Tc^{l,+},\,\, \Tc^{l,0} )
\end{equation}
which is an analogue of \cite[(4.13)]{Lo14}.

\paragraph[The category $W_{1,\whPhi} \cap \Tc^l$] From \label{para:W1xTcldecomposition} the torsion  quintuple \eqref{eq:Bltorsionquintuple}, we see that for every object $F \in W_{1,\whPhi} \cap \Tc^l$, the $\Tc^{l,+}$-component must be zero, or else  such a component would contribute a positive intersection number $f\ch_1$; this implies that  $F$ has a two-step filtration $F_0 \subseteq F_1 = F$ in $\Coh (X)$ where $F_0 \in W_{1,\whPhi} \cap \Coh^{\leq 1}(X)$ and is thus a $\whPhi$-WIT$_1$ fiber sheaf, while $F_1/F_0 \in \Tc^{l,0}$.  Since $f\ch_1$ is zero for both $F_0$ and $F_1/F_0$, the transform $\whPhi F[1]$ must be a torsion sheaf.

\paragraph[Transforms of torsion-free sheaves] The \label{para:torsionfreeshtransf}  torsion triple \eqref{eq:Bltorsiontriple} in $\Bl$ is taken by $\whPhi$ to the torsion  triple
\[
  (\wh{\Phi} \Fc^l [1], W_{1,\Phi}, \wh{\Phi} (W_{1,\whPhi} \cap \Tc^l) )
\]
in the abelian category $\wh{\Phi} \Bl$.  This implies that the heart $\wh{\Phi} \Bl [1]$ is a tilt of $\Coh (X)$ with respect to the torsion pair $(\mathcal T, \mathcal F)$ where
\begin{align*}
  \mathcal T &= \wh{\Phi} (W_{1,\whPhi} \cap \Tc^l) [1] \\
  \mathcal F &= \langle \wh{\Phi} \Fc^l [1], W_{1,\whPhi} \rangle.
\end{align*}
By \ref{para:W1xTcldecomposition}, we know $\mathcal T \subseteq \Coh^{\leq 1}(X)$.  Consequently, for every torsion-free sheaf $F$ on $X$ we have $F \in \mathcal F \subset \wh{\Phi} \Bl$, which implies ${\Phi} F [1]\in \Bl$.

\paragraph[Phases of  objects] We \label{para:phasesofobjects} analyse the phases of various objects in $\Bl$ with respect to $Z^l$-stability.   Note that if $F \in D^b(X)$ satisfies
\begin{align}
    \tilde{n} &= \ch_0(F), \notag\\
  \tilde{d} = f\ch_1(F)&, \text{\quad}  \tilde{c} = \Theta\ch_1(F), \notag\\
  \tilde{s} &= \ch_2(F) \label{eq:chFnotation}
\end{align}
then
\begin{align*}
  Z_\omega (F) &= -\ch_2(F) + \tfrac{\omega^2}{2}\ch_0(F) + i \omega \ch_1(F) \\
  &= -\tilde{s} + \left( (m-\tfrac{e}{2})u^2 + uv \right) \tilde{n} + i  ( u(\tilde{c} + m\tilde{d}) + v\tilde{d}) \\
  &= -\tilde{s} + (\alpha + (m-e))\tilde{n} + i ( u(\tilde{c} + m\tilde{d}) + v\tilde{d}) \text{ under the constraint \eqref{eq:vhyperbolaequation}}.
\end{align*}
Now further assume $F$ is a nonzero object of $\Bc^l$.  Consider the following scenarios:
\begin{itemize}
\item[(1)] $F \in \Coh^{\leq 0}(X)$.  Then $\ch_2(F)>0$, and so $Z_\omega (F) \in \mathbb{R}_{<0}$, giving $\phi (F)=1$.
\item[(2)] $F \in \Coh^{\leq 1}(X)$ and $\dimension F = 1$.  Then $\tilde{n}=0$.  We have $\tilde{d} =f\ch_1(F) \geq 0$ in this case.
    \begin{itemize}
    \item[(2.1)] If $\tilde{d}>0$, then $\phi (F) \to \tfrac{1}{2}$.
    \item[(2.2)] If $\tilde{d}=0$, then the effective divisor $\ch_1(F)$  is a positive multiple of the fiber class $f$, and so $(\Theta + mf)\ch_1(F)=\Theta \ch_1(F) = \tilde{c} > 0$, i.e.\ $\Im Z_\omega (F) = u\tilde{c}>0$.
        \begin{itemize}
        \item[(2.2.1)] If $\tilde{s} = \ch_2(F) > 0$ then $\phi (F) \to 1$.
        \item[(2.2.2)] If $\tilde{s} = 0$ then $\phi (F)=\tfrac{1}{2}$.
        \item[(2.2.3)] If $\tilde{s}<0$ then $\phi (F) \to 0$.
        \end{itemize}
    \end{itemize}
\item[(3)] $F \in \Coh^{=2}(X)$ and $f\ch_1(F)=\tilde{d}>0$.  Then $\phi (F) \to \tfrac{1}{2}$.
\item[(4)] $F \in \Tc^{l,0}$.  From the definition of $\Tc^{l,0}$, we have $\tilde{d}=f\ch_1(F)=0$ while $(\Theta + mf)\ch_1(F) >0$; we also know $F$ is $\wh{\Phi}$-WIT$_1$ from \ref{para:Bltorsiontriplequintuple}.  Thus $\wh{F}=\wh{\Phi} F [1]$ is a sheaf of rank zero, and so $\omega \ch_1 (\wh{F})$ must be strictly positive (if $\omega \ch_1(\wh{F})=0$, then $\wh{F}$ would be supported in dimension 0, implying $F$ itself is a fiber sheaf, a contradiction).  Thus from the discussion in \ref{para:ch1realZwcomparison} we know
    \[
    0 < -\Re Z_\omega ({\Phi} \wh{F} [1]) =  \Re Z_\omega (F)
    \]
    and hence $\phi (F) \to 0$.
\item[(5)] $F = A[1]$ where $A \in \Fc^{l,0}$.  Then $f\ch_1(A)=0$ and $(\Theta + mf)\ch_1(A) \leq 0$.  In this case, $A$ is $\wh{\Phi}$-WIT$_1$ by \ref{para:TlFlproperties}(v).  By a similar computation as in (4), we have
    \[
      0 < -\Re Z_\omega ({\Phi} \wh{A} [1])=-\Re Z_\omega (A[1]) = -\Re Z_\omega (F)
    \]
    and so $\phi (F) \to 1$.
\item[(6)] $F = A[1]$ where $A \in \Fc^{l,-}$.  Then $f\ch_1(A)<0$, i.e.\ $f\ch_1(F)>0$.  Hence $\phi (F) \to \tfrac{1}{2}$.
\end{itemize}


\section{Slope stability vs limit Bridgeland stability}\label{sec:maintheorem}

Given any torsion-free sheaf $E$ on $X$, we saw in  \ref{para:torsionfreeshtransf} that $\Phi E [1]$ lies in the heart $\Bl$.  In this section, we establish a comparison between   $\mu_\oo$-stability on $E$ and  $Z^l$-stability on the shifted transform $\Phi E [1]$ in the form of Theorem \ref{thm:Lo14Thm5-analogue}.  This theorem is the surface analogue of  \cite[Theorem 5.1]{Lo14}:

\begin{thm}\label{thm:Lo14Thm5-analogue}
Let $p : X \to B$ be a Weierstra{\ss} surface.
\begin{itemize}
\item[(A)] Suppose $E$ is a $\mu_\oo$-stable torsion-free sheaf on $X$ and $B = \tfrac{e}{2}f$.
\begin{itemize}
\item[(A1)] If $\oo \ch_1^B(E)>0$, then $\Phi E[1]$ is a $Z^l$-stable object in $\Bl$.
\item[(A2)] If $\oo \ch_1^B(E)=0$, then $\Phi E[1]$ is a $Z^l$-semistable object in $\Bl$, and the only $\Bl$-subobjects $G$ of $\Phi E[1]$ where $\phi (G)=\phi (\Phi E [1])$ are objects in  $\Phi (\Coh^{\leq 0}(X))$.
\item[(A3)] If $E$ is locally free, then $\Phi E[1]$ is a $Z^l$-stable object in $\Bl$.
\end{itemize}
\item[(B)] Suppose $F \in \Bl$ is a $Z^l$-semistable object with $f\ch_1(F) \neq 0$, and $F$ fits in the $\Bl$-short exact sequence (which exists by  \eqref{eq:Bltorsiontriple})
\[
 0 \to F' \to F \to F'' \to 0
\]
where $F' \in \langle \Fc^l[1], W_{0,\whPhi} \rangle$ and $F'' \in \langle W_{1,\whPhi} \cap \Tc^l\rangle$.  Then $\whPhi F'$ is a $\mu_\oo$-semistable torsion-free sheaf on $X$.
\end{itemize}
\end{thm}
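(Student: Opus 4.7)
The overall strategy rests on the key identity \eqref{eq:ooch1twistReZePhiE1relation}, which reads $\oo\ch_1^B(G) = -\tfrac{\lambda}{\alpha}\Re Z_\omega(\Phi G[1])$ and so, via the shifted Fourier-Mukai transform $\Phi[1]$, identifies the twisted $\oo$-degree on $\Coh (X)$ with the real part of the central charge on $\Bl$. Combined with the phase analysis in \ref{para:phasesofobjects}, where $\Im Z_\omega$ is dominated by $v\cdot f\ch_1$ along \eqref{eq:vhyperbolaequation}, this yields a dictionary in which $\mu_\oo$-slope comparisons on $\Coh (X)$ translate, at subleading order in $v$, into phase comparisons on $\Bl$. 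The plan for both (A) and (B) is to convert a hypothetical destabilization on one side into one on the other via this dictionary, using the torsion triple \eqref{eq:Bltorsiontriple} and quintuple \eqref{eq:Bltorsionquintuple} of $\Bl$ together with the WIT decompositions to manage the cohomological bookkeeping.

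For part (A), suppose we are given a $\Bl$-short exact sequence $0 \to M \to \Phi E[1] \to N \to 0$ with $\phi(M) \succeq \phi(\Phi E[1])$. Decompose $M$ via \eqref{eq:Bltorsiontriple} as $0 \to M_1 \to M \to M_2 \to 0$ with $M_1 \in \langle \Fc^l[1], W_{0,\whPhi}\rangle$ and $M_2 \in W_{1,\whPhi}\cap \Tc^l$. Applying $\whPhi$ and using $\whPhi (\Phi E[1]) = E$, the cohomology long exact sequence in $\Coh (X)$ yields an injection $\whPhi M_1 = H^0(\whPhi M) \hookrightarrow E$. The identity above then converts $\phi(M) \succeq \phi(\Phi E[1])$ into $\mu_\oo(\whPhi M_1) \geq \mu_\oo (E)$, which by $\mu_\oo$-stability of $E$ forces $\whPhi M_1 = 0$ or $\whPhi M_1 = E$. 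Combining these extreme cases with the phase classification in \ref{para:phasesofobjects} yields the strict inequality $\phi(M) \prec \phi(\Phi E[1])$ required for (A1) whenever $\oo\ch_1^B(E) > 0$. For (A2), equality is permitted, and tracing through shows that a same-phase $\Bl$-subobject must have its $\whPhi$-image a zero-dimensional sheaf, i.e.\ must lie in $\Phi (\Coh^{\leq 0}(X))$. For (A3), the local-freeness of $E$ rules out fiber-sheaf quotients of $E$ and, via a $\Hom$-vanishing argument through $\Phi$, forbids transforms of zero-dimensional sheaves as $\Bl$-subobjects of $\Phi E[1]$, recovering strict stability.

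For part (B), I first check that $\whPhi F'$ is a torsion-free sheaf. Using the torsion pair $(\Fc^l[1], W_{0,\whPhi})$ inside $\langle \Fc^l[1], W_{0,\whPhi}\rangle$, write $0 \to G_1[1] \to F' \to G_0 \to 0$ with $G_1 \in \Fc^l \subseteq W_{1,\whPhi}$ (by \ref{para:TlFlproperties}(v)) and $G_0 \in W_{0,\whPhi}$; applying $\whPhi$ gives the $\Coh (X)$-sequence $0 \to \wh{G_1} \to \whPhi F' \to \wh{G_0} \to 0$, and standard torsion-freeness of these WIT transforms on a Weierstra{\ss} surface gives torsion-freeness of $\whPhi F'$. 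Now suppose $A \subseteq \whPhi F'$ is a saturated $\mu_\oo$-destabilizer with $\mu_\oo(A) > \mu_\oo(\whPhi F')$; then both $A$ and $\whPhi F'/A$ are torsion-free, so by \ref{para:torsionfreeshtransf} both $\Phi A[1]$ and $\Phi (\whPhi F'/A)[1]$ lie in $\Bl$. Applying $\Phi$ to $0 \to A \to \whPhi F' \to \whPhi F'/A \to 0$, using $\Phi\whPhi F' = F'[-1]$, and shifting by $[1]$ produces a $\Bl$-short exact sequence $0 \to \Phi A[1] \to F' \to \Phi (\whPhi F'/A)[1] \to 0$. Composing $\Phi A[1] \hookrightarrow F' \hookrightarrow F$ and invoking the identity (together with $f\ch_1(F) \neq 0$ to keep $\Im Z_\omega$ non-degenerate), the inequality $\mu_\oo(A) > \mu_\oo(\whPhi F')$ becomes $\phi(\Phi A[1]) \succ \phi (F)$, contradicting the $Z^l$-semistability of $F$.

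The main obstacle, I expect, is verifying at each step that the triangle in $D^b(X)$ obtained by applying $\Phi$ or $\whPhi$ is indeed a short exact sequence in the relevant heart. This requires tracking the torsion triple \eqref{eq:Bltorsiontriple} and the WIT decompositions carefully through the cohomology long exact sequence, and in (B) uses saturation of $A$ to keep $\whPhi F'/A$ torsion-free so that $\Phi (\whPhi F'/A)[1]$ remains in $\Bl$. A secondary obstacle is the equality analysis: since every relevant object has $\Im Z_\omega \to +\infty$, phases limit to $\tfrac12$ at leading order, and separating $\phi \prec$ from $\phi \preceq$ forces one to read off the subleading behavior in $v$, which is precisely the content of \eqref{eq:ooch1twistReZePhiE1relation}.
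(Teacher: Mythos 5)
Your overall strategy --- using \eqref{eq:ooch1twistReZePhiE1relation} as a dictionary between $\mu_{\oo,B}$-slopes and real parts of $Z_\omega$, and decomposing sub- and quotient objects via the torsion triple \eqref{eq:Bltorsiontriple} --- is the same as the paper's, but several individual steps as written would fail. In part (A), the claim that $\mu_\oo(\whPhi M_1)\geq \mu_\oo(E)$ together with $\mu_\oo$-stability forces $\whPhi M_1=0$ or $\whPhi M_1=E$ is not correct: a proper nonzero subsheaf $\whPhi M_1\subsetneq E$ whose quotient is supported in dimension $0$ has the same $\ch_0$ and $\ch_1$ as $E$, hence the same slope, and is not excluded by stability. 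This is precisely the paper's case ``$\ch_1(\image \alpha)=0$'', which occupies a substantial portion of the argument: there the leading ($v$-) and constant terms of $Z_\omega$ of the subobject and of $\Phi E[1]$ coincide, and one must compare the $O(\tfrac1v)$ terms (the $u$-coefficients of $\Im Z_\omega$) and analyse the $\whPhi$-WIT$_1$ component separately, eventually concluding $G=F$ in the borderline subcase. Relatedly, reducing $\phi(M)\succeq\phi(\Phi E[1])$ to a slope inequality for $M_1$ alone requires controlling the phase of the component $M_2\in W_{1,\whPhi}\cap\Tc^l$, whose $f\ch_1$ vanishes; the subcase $M=M_2\in\Phi(\Coh^{\leq 0}(X))$, with phase exactly $\tfrac12$, is the sole source of the trichotomy (A1)/(A2)/(A3), and your sketch only gestures at it.

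In part (B), the torsion-freeness step is genuinely broken: there is no ``standard torsion-freeness of WIT transforms'' to invoke. For $G_0\in W_{0,\whPhi}$ the transform $\wh{G_0}$ can perfectly well be a torsion sheaf (e.g.\ $G_0=\OO_x$ is $\whPhi$-WIT$_0$ and $\wh{G_0}$ is supported on the fibre through $x$), and even if $\wh{G_1}$ were torsion-free, an extension of a torsion sheaf by a torsion-free sheaf need not be torsion-free. The paper's proof of torsion-freeness of $\whPhi F'$ makes essential use of the $Z^l$-semistability of $F$: it establishes $\Hom(\Phi\Coh^{\leq 1}(X)[1],F')=0$ by showing that a nonzero such map would produce a nonzero $\Bl$-subobject of $F$ lying in the category $\Ac_1$ of \eqref{eq:A1definition}, whose objects have $\phi\to 1$ (Lemma \ref{lem:Lo14Lem5-7analogue}), contradicting $\phi(F)\to\tfrac12$. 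Your argument never invokes the semistability of $F$ at this step, so it cannot be repaired without changing the approach. Finally, in the slope comparison you pass from $\mu_\oo(A)>\mu_\oo(\whPhi F')$ to $\phi(\Phi A[1])\succ\phi(F)$, but the phase of $F$ encodes the Chern data of $F$, not of $F'$; one needs Lemma \ref{lem:Lo14Lem5-9analogue} --- that $F''$ lies in $\Phi\Coh^{\leq 0}(X)$, again a consequence of the semistability of $F$ --- to know that $F''$ contributes nothing to $\ch_0$ and $\oo\ch_1^B$ and hence that the two slopes agree.
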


Even though the proof of Theorem \ref{thm:Lo14Thm5-analogue} is analogous to that of \cite[Theorem 5.1(A)]{Lo14}, we include most of the details for ease of reference, and also to lay out explicitly the necessary changes to the proof of \cite[Theorem 5.1]{Lo14}.

\begin{proof}[Proof of Theorem \ref{thm:Lo14Thm5-analogue}(A)]
Let us write $F = \Phi E [1]$  throughout the proof.  Since $\rank (E) \neq 0$, we have $\phi (F) \to \tfrac{1}{2}$.  Take any $\Bl$-short exact sequence
\begin{equation}
0 \to G \to F \to F/G \to 0
\end{equation}
where $G \neq 0$.  This yields a long exact sequence of sheaves
\begin{equation}\label{eq:lem:Lo14Thm5-1A-analogue-eq2}
0 \to \wh{\Phi}^0 G \to E \overset{\alpha}{\to} \wh{\Phi}^0 (F/G) \to \wh{\Phi}^1 G \to 0
\end{equation}
and we see $\whPhi^1 (F/G)=0$.  From the torsion triple \eqref{eq:Bltorsiontriple} in $\Bl$, we know $G$ fits in  the exact triangle
\[
  \Phi (\whPhi^0 G)[1] \to G \to \Phi (\whPhi^1 G) \to \Phi (\whPhi^0 G)[2]
\]
where $\Phi (\whPhi^0 G)[1] \in \langle \Fc^l [1], W_{0,\whPhi}\rangle$ is precisely the $\whPhi$-WIT$_0$ component of $G$, and $\Phi (\whPhi^1 G) \in W_{1,\whPhi} \cap \Tc^l$ the $\whPhi$-WIT$_1$ component of $G$.

\textbf{Suppose $\rank (\image \alpha) = 0$.} Then $\rank (\whPhi^0 G) = \rank E > 0$, and so $f\ch_1 (\Phi (\whPhi^0 G)[1])>0$.  Now we break into two cases:
\begin{itemize}
\item[(a)] $\ch_1 (\image \alpha) \neq 0$.  Then $\mu_{\oo,B} (\whPhi^0 G) < \mu_{\oo,B} (E)$, which implies $\phi (\Phi (\whPhi^0 G)[1]) \prec \phi (F)$.
    \begin{itemize}
    \item[(i)] If $\dimension \Phi (\whPhi^1 G)=2$:  from \ref{para:W1xTcldecomposition} we know $\Phi (\whPhi^1 G)$ fits in a short exact sequence of sheaves
        \begin{equation}\label{eq:PhiwhPhi1G-decomp}
          0 \to A' \to \Phi (\whPhi^1 G) \to A'' \to 0
        \end{equation}
        where $A' \in W_{1,X} \cap \Coh^{\leq 1}(X) \subset \Coh (\pi)_0$ and $A'' \in \Tc^{l,0}$.  Thus $f\ch_1(\Phi (\whPhi^1 G))=0$, and $Z_\omega (F)$ is dominated by its real part.  From the computation in \ref{para:ch1realZwcomparison}, we know $\Re Z_\omega (\Phi (\whPhi^1 G)) > 0$, and so $\phi (\Phi (\whPhi^1 G)) \to 0$, giving us $\phi (G) \prec \phi (F)$ overall.

    \item[(ii)] If $\dimension \Phi (\whPhi^1 G) \leq 1$: then the component $A''$ in (i) vanishes, and $\Phi (\whPhi^1 G) = A'$ is a $\whPhi$-WIT$_1$ fiber sheaf.  Then
        \[
          Z_\omega (\Phi (\whPhi^1 G))= -\bar{s} + i\bar{c}u
        \]
        where $\bar{s} = \ch_2(A') \leq 0$ while $\bar{c} = \Theta\ch_1(A')\geq 0$.

        If $\bar{s}<0$, then again we  have $\phi (G) \prec \phi (F)$.  On the other hand, if $\bar{s}=0$ then the order of magnitude of $Z_\omega (\Phi (\whPhi^1 G))$ as $v \to \infty$ is  $O(\tfrac{1}{v})$, and so we still have $\phi (G) \prec \phi (F)$ overall.
    \end{itemize}
\item[(b)] $\ch_1 (\image \alpha) =0$.  Then $\image \alpha \in \Coh^{\leq 0}(X)$, in which case $\ch_i (\whPhi^0 G) = \ch_i (E)$ for $i=0,1$.  From the cohomological Fourier-Mukai transform \eqref{eq:cohomFMT-Phi}, it follows that $\ch_0, f\ch_1$ and $\ch_2$ of
 $\Phi (\whPhi^0 G)[1]$ and $F$ agree; from \eqref{eq:ZwPhiEshift} we also see that  all the terms of  $Z_\omega ( \Phi (\whPhi^0 G)[1])$ and $Z_\omega(F)$ agree except the terms involving $u$.  As in (a)(i), we have a decomposition of $\Phi (\whPhi^1 G)$ of the form \eqref{eq:PhiwhPhi1G-decomp}.
    \begin{itemize}
    \item[(i)] If $\dimension \Phi (\whPhi^1 G)=2$: then $A'' \neq 0$, and we have $\Re Z_\omega (A'') > 0$ by \ref{para:ch1realZwcomparison} while $\Im Z_\omega (A'')$ has order of magnitude $O(\tfrac{1}{v})$.  On the other hand, $A'$ is a $\whPhi$-WIT$_1$ fiber sheaf and so $\Re Z_\omega (A') \geq 0$ while $\Im Z_\omega (A')$ also has order of magnitude $O(\tfrac{1}{v})$.  Overall, we have $\phi (G) \prec \phi (F)$.
    \item[(ii)] If $\dimension \Phi (\whPhi^1 G)\leq 1$: then $A''=0$ and $\Phi (\whPhi^1 G)=A'$ is a $\whPhi$-WIT$_1$ fiber sheaf with $\ch_2 (A') \leq 0$. With $\bar{s}, \bar{c}$ as in (a)(ii) above, we observe:
    \begin{itemize}
    \item If $\bar{s}<0$, then $\Re Z_\omega (\Phi (\whPhi^1 G))>0$ while $\Im Z_\omega (\Phi (\whPhi^1 G))$ has magnitude $O(\tfrac{1}{v})$, giving us  $\phi (G) \prec \phi (F)$ overall.
    \item If $\bar{s}=0$, then $\bar{c} \geq 0$ (with $\bar{c}=0$ iff $A'=0$) and $\whPhi^1 G \in \Coh^{\leq 0}(X)$.  Thus $\whPhi^0 (F/G)$ also lies in $\Coh^{\leq 0}(X)$ from the exact sequence \eqref{eq:lem:Lo14Thm5-1A-analogue-eq2}.  Since $F/G \in \Bl$, from the torsion triple \eqref{eq:Bltorsiontriple} in $\Bl$ we know $\whPhi^0 (F/G) \in \langle \whPhi \Fc^l [1], W_{1,\whPhi} \rangle$, i.e.\ $\whPhi^0 (F/G)$ is the extension of a sheaf in $W_{1,\whPhi}$ by a sheaf in $\whPhi \Fc^l [1]$.  However, every nonzero coherent sheaf in $\whPhi \Fc^l [1]$ has $f\ch_1 \neq 0$, and so must be supported in dimension at least 1.  Thus the $\whPhi \Fc^l [1]$-component of $\whPhi^0 (F/G)$ must  vanish, i.e.\ $\whPhi^0 (F/G)$ lies in $W_{1,\whPhi} \cap \Coh^{\leq 0}(X)$, which forces $\whPhi^0 (F/G)$ to be zero.  Then $F/G$ itself is zero, i.e.\ $G=F$.
    \end{itemize}
    \end{itemize}
\end{itemize}

\textbf{Suppose $\rank (\image \alpha) > 0$.} If $\whPhi^0 G \neq 0$ then $0 < \rank (\whPhi^0 G) < \rank (E)$ and so $\mu_{\oo,B} (\whPhi^0 G) < \mu_{\oo,B} (E)$, and so same argument as in part (a) above shows that $\phi (G) \prec \phi (F)$.  From now on, let us assume $\whPhi^0 G =0$, in which case we have the exact sequence of sheaves
\[
  0 \to E \to \whPhi^0 (F/G) \to \whPhi^1 G \to 0.
\]
Thus $G$ is a $\whPhi$-WIT$_1$ object, and from the torsion triple \eqref{eq:Bltorsiontriple} in $\Bl$ we see that $G$ must lie in $W_{1,\whPhi} \cap \Tc^l$.  As in case (a)(i) above, $G$ fits in a short exact sequence in $\Coh (X)$
\[
  0 \to A' \to G \to A'' \to 0
\]
where $A'$ is a $\whPhi$-WIT$_1$ fiber sheaf and $A'' \in \Tc^{l,0}$. We now divide into the following cases:
\begin{itemize}
\item $A'' \neq 0$: then we know $\Re Z_\omega (A'')$ is positive from \ref{para:ch1realZwcomparison} and is $O(1)$, while $\Im Z_\omega (A'')$ is $O(\tfrac{1}{v})$.  On the other hand, since $\ch_2(A') \leq 0$ we know $\Re Z_\omega (A')$ is nonnegative and $O(1)$,  while $\Im Z_\omega (A')$ is  $O(\tfrac{1}{v})$.  Overall, we have $\phi (G) \to 0$, giving us $\phi (G) \prec \phi (F)$.
\item $A''=0$ and $\ch_2 (A')<0$:  then $\phi (G) \to 0$  and we still have $\phi (G) \prec \phi (F)$.
\item $A''=0$ and $\ch_2 (A') =0$:  in this case $A' \in \Phi (\Coh^{\leq 0}(X))$ and so $\phi (G) = \tfrac{1}{2}$.  This is the most intricate of all the cases in this proof to treat, and we single out the following two  scenarios:
    \begin{itemize}
    \item[(S1)] If $\oo \ch_1^B(E)>0$: then $\Re Z_\omega (E)<0$ by \eqref{eq:ooch1twistReZePhiE1relation}, which gives $\phi (F) \succ \tfrac{1}{2} = \phi (G)$.  (Note that this is despite  $\phi (F) \to \tfrac{1}{2}$.)  Therefore, if $\oo \ch_1^B(E)>0$ then $\Phi E[1]$ is always $Z^l$-stable.  This proves statement (A1).
    \item[(S2)] If $\oo \ch_1^B(E)=0$: then $\Re Z_\omega (F)=0$, and $\phi (F) =\tfrac{1}{2} = \phi (G)$.  In this case, $\Phi E[1]$ is $Z^l$-semistable, and it would be strictly $Z^l$-semistable if and only if there exists a $\Bl$-subobject $G$ of $\Phi E[1]$ as in this case. This proves statement (A2).
    \end{itemize}

    Of course,   scenarios (S1) and (S2) above can be ruled out if we impose the vanishing $\Hom (\Phi (\Coh^{\leq 0}(X)),F)=0$, i.e.\ $\Hom (\Phi Q,F)=0$ for every $Q \in \Coh^{\leq 0}(X)$.  Note that for any $Q \in \Coh^{\leq 0}(X)$,
    \[
    \Hom (\Phi Q,F)=\Hom (Q, \whPhi F [1])=\Hom (Q,E[1])=\Ext^1 (Q,E).
    \]
    Hence $\Hom (\Phi (\Coh^{\leq 0}(X)),F)=0$ if and only if $\Ext^1 (Q,E)=0$ for every $Q \in \Coh^{\leq 0}(X)$, which in turn is equivalent to $E$ being a locally free sheaf by Lemma \ref{lem:surfaceshlocallyfreechar} below.  This proves statement (A3), and completes the proof of part (A).
\end{itemize}
\end{proof}

\begin{lem}\label{lem:surfaceshlocallyfreechar}
Suppose $E$ is a torsion-free sheaf $E$ on a smooth projective surface $X$.  Then $E$ is locally free if and only if $\Ext^1 (T,E)=0$ for every $T \in \Coh^{\leq 0}(X)$.
\end{lem}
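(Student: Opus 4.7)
The plan is to pass to stalks at each closed point $p \in X$ and invoke standard commutative algebra on the regular local ring $R_p = \OO_{X,p}$ of dimension $2$. The bridge between global Ext and stalk Ext is the local-to-global spectral sequence $H^p(X, \EExt^q(T, E)) \Rightarrow \Ext^{p+q}(T, E)$, combined with the observation that $\HHom(T, E) = 0$ whenever $T \in \Coh^{\leq 0}(X)$, which follows from torsion-freeness of $E$: a nonzero local section of $\HHom(T, E)$ would extend to a nonzero map from a $0$-dimensional sheaf into $E$ on a neighborhood, producing a $0$-dimensional subsheaf of $E$.

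For the forward direction, assume $E$ is locally free and take any $T \in \Coh^{\leq 0}(X)$. At a closed point $p$, $E_p$ is free and $T_p$ has finite length, so the identity $\mathrm{depth}(R_p) = 2 = \inf\{i : \Ext^i_{R_p}(k(p), R_p) \neq 0\}$ together with dévissage along a composition series of $T_p$ gives $\Ext^i_{R_p}(T_p, E_p) = 0$ for $i < 2$. Hence $\EExt^0(T, E) = \EExt^1(T, E) = 0$, and the spectral sequence forces $\Ext^1(T, E) = 0$.

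For the converse, fix a closed point $p$ and apply the hypothesis to $T = k(p)$. The vanishing $\EExt^0(k(p), E) = 0$ from torsion-freeness, together with the fact that $\EExt^1(k(p), E)$ is a skyscraper at $p$, makes the spectral sequence collapse to
\[
\Ext^1_X(k(p), E) \;\cong\; H^0(X, \EExt^1(k(p), E)) \;\cong\; \Ext^1_{R_p}(k(p), E_p).
\]
Thus $\Ext^1_{R_p}(k(p), E_p) = 0$, and torsion-freeness gives $\Hom_{R_p}(k(p), E_p) = 0$. These two vanishings say $\mathrm{depth}(E_p) \geq 2 = \dim R_p$, and Auslander-Buchsbaum on the regular local ring $R_p$ then forces $\mathrm{pd}(E_p) = 0$, i.e.\ $E_p$ is free. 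Since this holds at every closed point, $E$ is locally free.

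The only step that requires a little care is the spectral-sequence identification of $\Ext^1_{R_p}(k(p), E_p)$ with the global $\Ext^1(k(p), E)$, which crucially uses $\EExt^0(k(p), E) = 0$ extracted from torsion-freeness; the remainder is standard depth theory on a two-dimensional regular local ring.
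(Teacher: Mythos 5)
Your proof is correct, but it takes a genuinely different route from the paper's. The paper argues globally: for the ``only if'' direction it uses the double-dual sequence $0 \to E \to E^{\ast\ast} \to Q \to 0$, the fact that a reflexive sheaf on a smooth surface is locally free (so $Q \neq 0$ when $E$ is not locally free), and the observation that this extension class is a nonzero element of $\Ext^1(Q,E)$ (it cannot split, since $E^{\ast\ast}$ is torsion-free); for the ``if'' direction it applies Serre duality, $\Ext^1(T,E) \cong \Ext^1(E, T\otimes\omega_X)^\ast \cong H^1(X, E^\ast\otimes T\otimes\omega_X)^\ast = 0$, using that $E^\ast\otimes T\otimes\omega_X$ is zero-dimensional. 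You instead localise everything: the local-to-global spectral sequence (with $\EExt^0(T,E)=0$ from torsion-freeness) reduces both directions to depth computations over the two-dimensional regular local rings $\OO_{X,p}$, with Auslander--Buchsbaum closing the converse. Both arguments are complete. The paper's version is shorter granted the standard facts it quotes, but your version is in a sense more self-contained --- the ``reflexive implies locally free on a surface'' fact that the paper leans on is usually itself proved by exactly the depth/Auslander--Buchsbaum argument you give --- and it makes transparent why the statement is special to surfaces: in dimension $n$ one would need the vanishing of $\Ext^i(T,E)$ for all $1 \leq i \leq n-1$ to force depth $n$ at every point. The one step worth stating explicitly in your write-up is the identification of the stalk of $\EExt^1(k(p),E)$ at $p$ with $\Ext^1_{R_p}(k(p)_p, E_p)$, which holds because both arguments of the local Ext are coherent; with that in place the collapse of the spectral sequence is exactly as you describe.
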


\begin{proof}
Consider the short exact sequence of sheaves
\[
0 \to E \to E^{\ast \ast} \to Q \to 0
\]
where $Q$ is necessarily a sheaf in $\Coh^{\leq 0}(X)$.  If $E$ is not locally free, then $Q \neq 0$ and we have  $\Ext^1 (Q,E) \neq 0$.  On the other hand, if $E$ is locally free then for any $T \in \Coh^{\leq 0}(X)$ we have $\Ext^1 (T,E)\cong \Ext^1 (E,T\otimes \omega_X) \cong H^1 (X,E^\ast \otimes T)=0$.
\end{proof}

\begin{proof}[Proof of Theorem \ref{thm:Lo14Thm5-analogue}(B)]
Let $F', F, F''$ be as in the statement of the theorem.  We begin by showing that $\whPhi F'$ is a torsion-free sheaf, i.e.\ $\Hom (\Coh^{\leq 1}(X), \whPhi F')=0$, i.e.\
\begin{equation}\label{lem:Lo14Thm5-1B-analogue-eq1}
\Hom (\Phi \Coh^{\leq 1}(X)[1], F')=0.
\end{equation}

Proceeding as in the proof of \cite[Lemma 5.8]{Lo14}, we observe
\begin{align*}
  \Phi \Coh^{\leq 1}(X)[1]  &\subset \langle \{ E \in W_{1,\whPhi} : f\ch_1(E)=0\}, \scalea{\gyoung(;;+,;;+)}[-1], \Coh^{\leq 0}(X)[-1]\rangle [1] \\
  &\subset \langle \Coh (X)[1], \scalea{\gyoung(;;+,;;+)}, \Coh^{\leq 0}(X) \rangle \\
  &\subset \langle \Bl[1], \Bl\rangle.
\end{align*}
Therefore, in order to prove the vanishing \eqref{lem:Lo14Thm5-1B-analogue-eq1}, it suffices to show the following two things:
\begin{itemize}
\item[(i)] For any $G \in W_{1,\whPhi}$ with $f\ch_1(G)=0$, we have $\Hom_{\Bl} (\mathcal{H}^0_{\Bl} (G[1]),F')=0$.
\item[(ii)] $\Hom (\langle\, \scalea{\gyoung(;;+,;;+)}, \Coh^{\leq 0}(X) \rangle, F' )=0$.
\end{itemize}

For (i), let us consider the $(\Tc^l,\Fc^l)$-decomposition of $G$ in $\Coh (X)$
\[
0 \to G' \to G \to G'' \to 0.
\]
This shows $\mathcal{H}^0_{\Bl}(G[1]) = G''[1]$.  Since $G$ is a $\whPhi$-WIT$_1$ sheaf, so is its subsheaf $G'$; thus $G' \in W_{1,\whPhi} \cap \Tc^l$, and  from \ref{para:W1xTcldecomposition} we have  $f\ch_1(G')=0$.  Since $f\ch_1(G)=0$, we also have $f\ch_1(G'')=0$.  By considering the $\mu_f$-HN filtration of $G''$, we obtain  $G'' \in \Fc^{l,0}$.

For any $\Bl$-morphism $\alpha : G''[1] \to F'$ and with $\Ac_1$ defined as in \eqref{eq:A1definition} below, we now have $\image \alpha \in \Ac_1$  and $\phi (\image \alpha) \to 1$ by Lemma \ref{lem:Lo14Lem5-7analogue} below.  However, this gives a composition of $\Bl$-injections
\[
\image \alpha \hookrightarrow F' \hookrightarrow F.
\]
  Hence $\alpha$ must be zero, or else $F$ would be destabilised, proving (i).  A similar argument as above proves (ii).  Hence $\whPhi F'$ is a torsion-free sheaf on $X$.

Next, we show that $\whPhi F'$ is $\mu_\oo$-semistable.  Take any short exact sequence of coherent sheaves on $X$
\[
  0 \to B \to \whPhi F' \to C \to 0
\]
where $B, C$ are both torsion-free sheaves.  Then $\Phi [1]$ takes this short exact sequence to a $\Bl$-short exact sequence
\[
0 \to \Phi B[1] \to F' \to \Phi C [1] \to 0
\]
by \ref{para:torsionfreeshtransf}.  The $Z^l$-semistability of $F$ gives $\phi (\Phi B[1]) \preceq \phi (F)$, which implies $\mu_\oo (B) \leq \mu_\oo (\whPhi F)$.  On the other hand, since $F''$ is precisely the $\whPhi$-WIT$_1$ component of $H^0(F)$, by Lemma \ref{lem:Lo14Lem5-9analogue} below we have $F'' \in \Phi \Coh^{\leq 0}(X)$, i.e.\ $\whPhi F'' \in \Coh^{\leq 0}(X)[-1]$.  This gives
\[
  \mu_\oo (\whPhi F') = \mu_\oo (\whPhi F) \geq \mu_\oo (B).
\]
Hence $\whPhi F'$ is a $\mu_\oo$-semistable torsion-free sheaf.
\end{proof}

Let us define
\begin{equation}\label{eq:A1definition}
  \Ac_1 = \langle \Coh^{\leq 0}(X), \scalea{\gyoung(;;+,;;+)}, \, \Fc^{l,0}[1]\rangle.
\end{equation}

\begin{lem}\label{lem:Lo14Lem5-7analogue}
The category $\Ac_1$ is closed under quotient in $\Bl$, and every object in this category satisfies $\phi \to 1$.
\end{lem}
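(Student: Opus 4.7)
The plan is to handle the two assertions separately. For the phase claim, I would apply the case analysis in \ref{para:phasesofobjects} to each generator of $\Ac_1$: $\Coh^{\leq 0}(X)$ is case (1); the pure $1$-dimensional fiber sheaves in $\scalea{\gyoung(;;+,;;+)}$ fall under case (2.2.1), since the positivity of the $\mu$-HN slopes forces $\ch_2 > 0$; and $\Fc^{l,0}[1]$ is case (5). In each case $\Re Z_\omega (E) < 0$ is of order $O(1)$ while $\Im Z_\omega (E) \geq 0$ is of order $O(u) = O(1/v)$, and since $Z_\omega$ is additive on $\Bl$-short exact sequences these sign and magnitude properties persist under extensions, giving $\phi (E) \to 1$ for every nonzero $E \in \Ac_1$.

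For closure under $\Bl$-quotients, my strategy is to show that any $\Bl$-quotient of a generator lies in $\Ac_1$, and then deduce the general case by a standard filtration-and-image argument: if $E \in \Ac_1$ has an extension filtration $0 = E_0 \subset \cdots \subset E_n = E$ by the generators and $q \colon E \twoheadrightarrow Y$ is a $\Bl$-quotient, the images $q(E_i) \subseteq Y$ filter $Y$ with successive quotients being $\Bl$-quotients of $E_i/E_{i-1}$, hence in $\Ac_1$, so $Y \in \Ac_1$ by extension closure.

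The core computation applies uniformly to the three generator cases. Given a $\Bl$-short exact sequence $0 \to K \to E \to Y \to 0$ with $E$ a generator, the long exact cohomology sequence in $\Coh (X)$ reads $0 \to H^{-1}(K) \to H^{-1}(E) \to H^{-1}(Y) \to H^0(K) \to H^0(E) \to H^0(Y) \to 0$. Combining additivity of $f\ch_1$ along this sequence with the facts that $f\ch_1 \leq 0$ on $\Fc^l$ (by Lemma \ref{lem:TclFcldefinitions}(1)(c) applied to each $\Fc^l$-object taken as a subsheaf of itself), that $f\ch_1 \geq 0$ on $\Tc^l \subseteq \Bl$ (by \ref{para:TlFlproperties}(iv)), and that $f\ch_1 (E) = 0$ for each generator, forces $f\ch_1 (H^{-1}(Y)) = 0$; the subsheaf characterisation of $\Fc^l$ in Lemma \ref{lem:TclFcldefinitions}(1)(c) then upgrades this to $H^{-1}(Y) \in \Fc^{l,0}$ via $\mu_f$-semistability at slope $0$. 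Meanwhile $H^0(Y)$ is a $\Coh$-quotient of $E$ (when $E$ is a sheaf) or zero (when $E \in \Fc^{l,0}[1]$); in the former case this quotient decomposes as its $0$-dimensional torsion (in $\Coh^{\leq 0}(X)$) and its pure $1$-dimensional part, the latter inheriting positivity of $\mu$-HN slopes from $E$ when $E \in \scalea{\gyoung(;;+,;;+)}$, so $H^0(Y) \in \langle \Coh^{\leq 0}(X), \scalea{\gyoung(;;+,;;+)}\rangle \subseteq \Ac_1$. Thus $Y$ is a $\Bl$-extension of $H^0(Y)$ by $H^{-1}(Y)[1]$, both in $\Ac_1$, giving $Y \in \Ac_1$.

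The main obstacle, requiring the most care, is establishing $H^{-1}(Y) \in \Fc^{l,0}$: the $f\ch_1 = 0$ conclusion is obtained by sign-pinning along the long exact sequence, but upgrading it to $\mu_f$-semistability at slope $0$ requires the subsheaf characterisation of $\Fc^l$ applied to arbitrary subsheaves of $H^{-1}(Y)$, and correctly tracking the HN-slope inheritance in the $\scalea{\gyoung(;;+,;;+)}$ case is an additional delicate point.
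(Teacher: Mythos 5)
Your proposal is correct and follows essentially the same route as the paper: the phase claim comes from the case analysis in \ref{para:phasesofobjects} applied to the generators together with additivity of $Z_\omega$, and quotient-closure comes from pinning $f\ch_1$ to zero along the long exact cohomology sequence (which forces $H^{-1}$ of the quotient into $\Fc^{l,0}$) combined with the fact that $H^0$ of the quotient is a sheaf quotient of $H^0$ of the original object, landing in $\langle \Coh^{\leq 0}(X), \scalea{\gyoung(;;+,;;+)}\,\rangle$. The only cosmetic difference is that you reduce to generators via a filtration-and-image argument, whereas the paper invokes the Serre subcategory $\Bl_0$ of \ref{para:TlFlproperties}(iv) directly; both amount to the same sign argument on $f\ch_1$.
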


\begin{proof}
The second part of the lemma follows from the computations in \ref{para:phasesofobjects}.  For the first part, take any $A \in \Ac_1$ and consider any $\Bl$-short exact sequence of the form
\[
0 \to A' \to A \to A'' \to 0.
\]
We need to show that $A'' \in \Ac_1$.  Recall that $\Bl_0 = \{ F \in \Bl : f\ch_1(F)=0\}$ is a Serre subcategory of $\Bl$; also note that $\Ac_1$ is contained in $\Bl_0$.  Hence $A''$ lies in $\Bl_0$, meaning $H^{-1}(A'') \in \Fc^{l,0}[1]$.  On the other hand, since $H^0(A)\in \langle \Coh^{\leq 0}(X), \scalea{\gyoung(;;+,;;+)} \,\rangle$ from the definition of $\Ac_1$, we also have $H^0(A'') \in \langle \Coh^{\leq 0}(X), \scalea{\gyoung(;;+,;;+)}\,\rangle$. Thus $A'' \in \Ac_1$, and we are done.
\end{proof}

\begin{lem}\label{lem:Lo14Lem5-9analogue}
Suppose $F \in \Bl$ is a $Z^l$-semistable object with $f\ch_1(F) \neq 0$.  Then the $\whPhi$-WIT$_1$ component of $H^0(F)$ lies in $\Phi \Coh^{\leq 0}(X)$.
\end{lem}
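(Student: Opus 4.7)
The plan is to excise the $\whPhi$-WIT$_1$ component $H_1$ of $H^0(F)$ in two stages by using $Z^l$-semistability against $\Bl$-quotients whose limiting phase is $0$, while noting that $\phi(F) \to \tfrac{1}{2}$.  Indeed, the hypothesis $f\ch_1(F) \neq 0$ combined with \ref{para:TlFlproperties}(iv) forces $f\ch_1(F) > 0$, so $\Im Z_\omega(F)$ will grow like $v$ while $\Re Z_\omega(F)$ stays $O(1)$ along \eqref{eq:vhyperbolaequation}.

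First I would form the $(W_{0,\whPhi}, W_{1,\whPhi})$-decomposition $0 \to H_0 \to H^0(F) \to H_1 \to 0$ in $\Coh(X)$; because $\Tc^l$ is a torsion class, $H_1$ will lie in $W_{1,\whPhi} \cap \Tc^l$.  I then apply \ref{para:W1xTcldecomposition} to write $H_1$ as a sheaf extension $0 \to G_0 \to H_1 \to G_1 \to 0$ with $G_0 \in W_{1,\whPhi} \cap \Coh^{\leq 1}(X)$ a $\whPhi$-WIT$_1$ fiber sheaf and $G_1 \in \Tc^{l,0}$.  Since all of $H^0(F), H_1, G_1$ lie in $\Tc^l$, composing the canonical $\Bl$-surjection $F \twoheadrightarrow H^0(F)$ with the subsequent sheaf quotients produces a $\Bl$-surjection $F \twoheadrightarrow G_1$.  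By \ref{para:phasesofobjects}(4) I will have $\phi(G_1) \to 0$ whenever $G_1 \neq 0$, so $Z^l$-semistability will force $G_1 = 0$.  Hence $H_1 = G_0$ will be a pure dimension-$1$, $\whPhi$-WIT$_1$ fiber sheaf (its zero-dimensional part vanishes because $\Coh^{\leq 0}(X) \subset W_{0,\whPhi}$).

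Next I would refine $H_1$ via its $\mu$-HN filtration in $\Coh^1(\pi)_0$, whose factors lie in $\scalea{\gyoung(;;+,;;+)} \cup \scalea{\gyoung(;;+,;;0)} \cup \scalea{\gyoung(;;+,;;-)}$.  A short argument using $\Phi \scalea{\gyoung(;;+,;;+)} = \scalea{\gyoung(;;+,;;-)}$ from \ref{para:ourellipticfibration} together with $\Phi\whPhi = \mathrm{id}[-1]$ will show $\scalea{\gyoung(;;+,;;+)} \subset W_{0,\whPhi}$; since $W_{1,\whPhi}$ is closed under subsheaves, no HN factor of $H_1$ can then lie in $\scalea{\gyoung(;;+,;;+)}$, yielding a sheaf extension $0 \to A_0 \to H_1 \to A_- \to 0$ with $A_0 \in \scalea{\gyoung(;;+,;;0)} = \Phi\Coh^{\leq 0}(X)$ and $A_- \in \scalea{\gyoung(;;+,;;-)}$.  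The map $F \twoheadrightarrow A_-$ will again be a $\Bl$-surjection, and a nonzero $A_-$ has $\ch_0 = 0 = f\ch_1$, $\Theta\ch_1 > 0$ and $\ch_2 < 0$, so that $\Re Z_\omega(A_-) > 0$ is $O(1)$ while $\Im Z_\omega(A_-) = u\,\Theta\ch_1(A_-) = O(1/v)$, giving $\phi(A_-) \to 0$ and once more contradicting $\phi(F) \preceq \phi(A_-)$.  So $A_-$ must vanish, leaving $H_1 = A_0 \in \Phi\Coh^{\leq 0}(X)$.

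The step I expect to be the main obstacle is the inclusion $\scalea{\gyoung(;;+,;;+)} \subset W_{0,\whPhi}$: pinning this down cleanly requires chasing through the equivalence chart of \ref{para:ourellipticfibration} together with the identity $\Phi\whPhi = \mathrm{id}[-1]$.  Once that is in hand, everything reduces to the uniform phase comparison of $\phi(F) \to \tfrac{1}{2}$ against $\Bl$-quotients whose phase collapses to $0$.
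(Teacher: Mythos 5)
Your proof is correct and follows essentially the same route as the paper's: both arguments first use \ref{para:W1xTcldecomposition} to strip off the $\Tc^{l,0}$ quotient of the $\whPhi$-WIT$_1$ component via the phase comparison $\phi \to 0$ against $\phi(F) \to \tfrac{1}{2}$, and then run the same comparison on the remaining $\whPhi$-WIT$_1$ fiber sheaf to kill every piece with $\ch_2 < 0$, leaving a semistable fiber sheaf with $\ch_2 = 0$ lying in $\Phi\,\Coh^{\leq 0}(X)$. The only cosmetic difference is that you obtain the final identification from the equivalence chart for $\Phi$, where the paper cites \cite[Proposition 6.38]{FMNT}.
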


\begin{proof}
Let $G$ denote the $\whPhi$-WIT$_1$ component of $H^0(F)$.  With respect to the torsion triple \eqref{eq:Bltorsiontriple} in $\Bl$, this is precisely the $W_{1,\whPhi} \cap \Tc^l$ component of $F$.  Hence by \ref{para:W1xTcldecomposition}, $G$ has a two-step filtration $G_0 \subseteq G_1 = G$ in $\Coh (X)$ such that $G_1/G_0 \in \Tc^{l,0}$ and $G_0$ is a $\whPhi$-WIT$_1$ fiber sheaf (and so $\ch_2(G_0) \leq 0$).  Now we have a composition of $\Bl$-surjections
\[
  F \twoheadrightarrow G \twoheadrightarrow G_1/G_0
\]
with $\phi (F) \to \tfrac{1}{2}$ while $\phi (G_1/G_0) \to 0$ from \ref{para:phasesofobjects}(4).  Since $F$ is assumed to be $Z^l$-semistable, this forces $G_1/G_0=0$, and so $G=G_0$.

Suppose now that $\bar{c} = \Theta \ch_1(G)$ and $\bar{s} = \ch_2(G)$.  Then
\[
  Z_\omega (G) = -\bar{s} + i\bar{c}u.
\]
By the $Z^l$-semistability of $F$, the fiber sheaf $G$ cannot have any quotient sheaf with $\ch_2<0$ (such a quotient would have $\phi \to 0$ by \ref{para:phasesofobjects}(2.2.3), destabilising $F$).  Hence $G$ is a slope semistable fiber sheaf with $\ch_2=0$, implying $G \in \Phi \Coh^{\leq 0}(X)$ \cite[Proposition 6.38]{FMNT}.
\end{proof}

\section{The Harder-Narasimhan property of limit Bridgeland stability}\label{sec:HNproperty}

Following the line of thought in \cite[Section 6]{Lo14}, we begin by constructing a torsion triple in $\Bl$ that separates objects of distinct phases.  Recall the definition \eqref{eq:A1definition}
\[
  \Ac_1 = \langle \Coh^{\leq 0}(X), \scalea{\gyoung(;;+,;;+)}, \, \Fc^{l,0}[1]\rangle.
\]

\begin{lem}\label{lem:Lo14Lem6-1analogue}
The category $\Ac_1$ is a torsion class in $\Bl$.
\end{lem}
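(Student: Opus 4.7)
The plan is to invoke Polishchuk's lemma \cite[Lemma 1.1.3]{Pol}, recalled in \ref{para:torsionpairtilting}: a subcategory of a noetherian abelian category closed under extensions and quotients is automatically a torsion class. Closure of $\Ac_1$ under extensions inside $\Bl$ is immediate from its definition as an extension closure, and closure under $\Bl$-quotients is exactly the content of Lemma \ref{lem:Lo14Lem5-7analogue}. So once $\Bl$ is known to be noetherian, the statement would follow at once, with the torsion-free class given by $\Ac_1^\circ$.

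The main obstacle is therefore to verify that $\Bl$ itself is noetherian, since tilts of noetherian hearts are not automatically noetherian. To address this I would take an ascending chain $F_1 \subseteq F_2 \subseteq \cdots$ of $\Bl$-subobjects of a fixed $F \in \Bl$, and split each $F_i$ using the torsion quintuple \eqref{eq:Bltorsionquintuple} into its $\Fc^{l,0}[1]$, $\Fc^{l,-}[1]$, $\Coh^{\leq 1}(X)$, $\Tc^{l,+}$, and $\Tc^{l,0}$ components. Each component chain sits inside the corresponding piece of $F$, and the numerical invariants $\ch_0$, $f\ch_1$, and $(\Theta + mf)\ch_1$ are nonnegative and bounded along such chains by \ref{para:TlFlproperties}(iv) and the definition of the quintuple; combined with the noetherianness of $\Coh(X)$ this should force each component chain, hence the original one, to stabilize.

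Should the noetherianness argument prove more delicate than expected, I would fall back on constructing the maximal $\Ac_1$-subobject of each $F \in \Bl$ directly. Using the torsion quintuple one extracts from $H^{-1}(F)[1]$ its $\mu_f$-slope-zero HN piece, which lies in $\Fc^{l,0}[1]$, and from $H^0(F) \in \Tc^l$ one extracts the maximal subsheaf belonging to $\langle \Coh^{\leq 0}(X), \scalea{\gyoung(;;+,;;+)}\rangle$ (the latter existing because this subcategory is closed under extension and quotient in the noetherian category $\Coh(X)$). Assembling these into a single $\Bl$-subobject $A \hookrightarrow F$ and verifying $\Hom_{\Bl}(\Ac_1, F/A) = 0$ would exhibit $(\Ac_1, \Ac_1^\circ)$ as a torsion pair without invoking Polishchuk. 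Either way, the real work is in controlling the interaction between the sheaf-theoretic Harder-Narasimhan filtrations and $\Bl$-subobject relations in the tilted heart.
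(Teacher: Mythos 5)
Your primary route --- Polishchuk's lemma applied inside $\Bl$ --- is not available as stated: it requires $\Bl$ to be noetherian, and this is established nowhere in the paper (nor is it needed elsewhere in it). Your sketch for proving noetherianity does not close the gap. For an ascending chain $F_1 \subseteq F_2 \subseteq \cdots \subseteq F$ in $\Bl$, the pieces of the torsion quintuple \eqref{eq:Bltorsionquintuple} do not form ascending chains in any obvious way: the inclusion $F_i \subseteq F_{i+1}$ induces an injection $H^{-1}(F_i) \hookrightarrow H^{-1}(F_{i+1})$, but the induced map $H^0(F_i) \to H^0(F_{i+1})$ has kernel $H^{-1}(F_{i+1}/F_i)$, which can be nonzero; controlling this is precisely the hard point in any proof that a tilted heart is noetherian. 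So "stabilize componentwise using noetherianity of $\Coh(X)$" is not an argument here.

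Your fallback, however, is essentially the paper's proof, and is the route to take. Closure under quotients is Lemma \ref{lem:Lo14Lem5-7analogue}; the remaining work is to produce, for each $F \in \Bl$, a subobject $A \in \Ac_1$ with $F/A \in \Ac_1^\circ$. Two details you gloss over matter. First, after splitting off the $\Fc^{l,0}[1]$-part $G[1]$ of $F$ (giving $F' = F/G[1]$ with $\Hom(\Fc^{l,0}[1],F')=0$), what is needed is the maximal $\Bl$-subobject of $F'$ lying in $\langle \Coh^{\leq 0}(X), \scalea{\gyoung(;;+,;;+)}\,\rangle$ --- not the maximal subsheaf of $H^0(F)$ in that category, since a subsheaf of $H^0(F)$ need not lift to a $\Bl$-subobject of $F'$. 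The paper obtains the existence of this maximal subobject by applying $\whPhi^0$ to an ascending chain of such subobjects, landing in an ascending chain of subsheaves of the fixed coherent sheaf $\whPhi^0 F'$ and invoking noetherianity of $\Coh(X)$ there. Second, the "assembly" of $G[1]$ and $U$ into a single subobject of $F$ is carried out with the octahedral axiom applied to $F \twoheadrightarrow F' \twoheadrightarrow F'/U$, which produces $M$ with $H^{-1}(M)\cong G$ and $H^0(M)\cong U$, hence $M \in \Ac_1$; one then verifies $F'/U \in \Ac_1^\circ$. With these two points made precise, your fallback is a complete proof.
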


\begin{proof}
We already showed in Lemma \ref{lem:Lo14Lem5-7analogue} that $\Ac_1$ is closed under quotient in $\Bl$.  It remains to show that every object $F \in\Bl$ is the extension of an object in $\Ac_1^\circ$ by an object in $\Ac_1$.

For any $F \in \Bl$, consider the $\Bl$-short exact sequence
\[
0 \to G[1] \to F \to F' \to 0
\]
where $G[1]$ is the $\Fc^{l,0}[1]$-component of $F$ with respect to the torsion quintuple \ref{eq:Bltorsionquintuple}; equivalently, $G$ is the $\Fc^{l,0}$-component of $H^{-1}(F)$.  Note that $\Hom (\Fc^{l,0}[1],F')=0$ by construction.

Suppose $F' \notin \Ac_1^\circ$.  Then there exists a nonzero morphism $\beta : U \to F'$ where $U \in \Ac_1$.  Since $\Ac_1$ is closed under quotient in $\Bl$, we can replace $U$ by $\image \beta$ and assume $\beta$ is a $\Bl$-injection.  The vanishing  $\Hom (\Fc^{l,0}[1],F')=0$ then implies $H^{-1}(U)=0$ and so $U = H^0(U) \in \langle \Coh^{\leq 0}(X), \scalea{\gyoung(;;+,;;+)} \,\rangle$.

Suppose we have an ascending chain in $\Bl$
\[
U_1 \subseteq U_2 \subseteq \cdots \subseteq U_m \subseteq \cdots \subseteq F'
\]
where $U_i \in \langle \Coh^{\leq 0}(X), \scalea{\gyoung(;;+,;;+)}  \,\rangle$ for all $i$.  This induces an ascending chain of coherent sheaves
\[
  \whPhi^0 U_1 \subseteq \whPhi^0 U_2 \subseteq \cdots \subseteq \whPhi^0 F'.
\]
Thus the $U_i$ must stabilise, i.e.\ there exists a maximal $\Bl$-subobject $U$ of $F'$ lying in $\langle \Coh^{\leq 0}(X), \scalea{\gyoung(;;+,;;+)}  \,\rangle$.  Applying the octahedral axiom to the $\Bl$-surjections $F \twoheadrightarrow F' \twoheadrightarrow F'/U$ gives the diagram
\[
\scalebox{0.8}{
\xymatrix{
  & & & G[2] \ar[ddd] \\
  & & & \\
  & F' \ar[dr] \ar[uurr] & & \\
  F \ar[ur] \ar[rr] & & F'/U \ar[r] \ar[dr] & M [1] \ar[d] \\
  & & & U [1]
}
}
\]
in which every straight line is an exact triangle, and for some $M \in \Bl$.  The vertical exact triangle gives $H^{-1}(M) \cong G$ and $H^0(M) \cong U$, and so $M \in \Ac_1$.  A similar argument as in the proof of \cite[Lemma 6.1(b)]{Lo14} then shows that $F'/U \in \Ac_1^\circ$, thus  finishing the proof.
\end{proof}

We now define
\begin{align}
\Ac_{1,1/2} &:= \langle \Ac_1, \Fc^{l,-}[1], \scalea{\gyoung(;;+,;;0)},  \scalea{\gyoung(;;*,;+;*)}, \scalea{\gyoung(;+;*,;+;*)} \, \rangle \notag \\
&= \langle \Fc^l[1], \xymatrix @-2.3pc{
\scalea{\gyoung(;;,;;+)} &  \scalea{\gyoung(;;+,;;+)} &  \scalea{\gyoung(;;*,;+;*)} & \scalea{\gyoung(;+;*,;+;*)} \\
& \scalea{\gyoung(;;+,;;0)} & &
}
\rangle.   \label{eq:A1and1over2definition}
\end{align}

\begin{lem}\label{lem:Lo14Lem6-2analogue}
$\Ac_{1,1/2}$ is a torsion class in $\Bl$.
\end{lem}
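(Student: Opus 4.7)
The plan is to follow the same two-step strategy as in Lemma \ref{lem:Lo14Lem6-1analogue}: first verify that $\Ac_{1,1/2}$ is closed under $\Bl$-quotient, then for each $F \in \Bl$ construct a $\Bl$-short exact sequence $0 \to T \to F \to F'' \to 0$ with $T \in \Ac_{1,1/2}$ and $F'' \in \Ac_{1,1/2}^\circ$.

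For closure under quotient, I would take $A \in \Ac_{1,1/2}$ together with a $\Bl$-surjection $A \twoheadrightarrow A''$, and analyse $A''$ using the torsion quintuple \eqref{eq:Bltorsionquintuple} in $\Bl$. Each generator of $\Ac_{1,1/2}$ is characterised by a combination of $\mu_f$-slope, support dimension, and $\whPhi$-WIT type, and these properties propagate to $\Bl$-quotients. The crucial point is that the $\Tc^{l,0}$-component of $A''$ must vanish: such a component would carry phase $\phi \to 0$ by \ref{para:phasesofobjects}(4), which cannot arise as a quotient of objects whose phases tend to $1$ or to $\tfrac{1}{2}$ within a single $\Bl$-surjection. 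The remaining quintuple components of $A''$ can then be assembled into the generating blocks of $\Ac_{1,1/2}$ by regrouping according to $\mu_f$-HN type and $\whPhi$-WIT type.

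For the torsion decomposition, I would start from the $\Ac_1$-decomposition $0 \to T_1 \to F \to F_1 \to 0$ supplied by Lemma \ref{lem:Lo14Lem6-1analogue}, then enlarge $T_1$ by a maximal $\Bl$-subobject of $F_1$ drawn from $\langle \Fc^{l,-}[1], \scalea{\gyoung(;;+,;;0)}, \scalea{\gyoung(;;*,;+;*)}, \scalea{\gyoung(;+;*,;+;*)}\rangle$. An ascending chain of such $\Bl$-subobjects induces, via $H^{-1}, H^0$ and the transform $\whPhi$, ascending chains of coherent subsheaves of a fixed ambient sheaf on $X$, which must stabilise by noetherianity of $\Coh(X)$. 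Applying the octahedral axiom to $F \twoheadrightarrow F_1 \twoheadrightarrow F_1/U$, exactly as in the final diagram of the proof of Lemma \ref{lem:Lo14Lem6-1analogue}, assembles the desired $T \subseteq F$ lying in $\Ac_{1,1/2}$; maximality of $U$, together with the vanishing $\Hom(\Ac_1, F_1)=0$, then forces $F/T \in \Ac_{1,1/2}^\circ$.

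The main obstacle is the noetherianity step in the iterative extraction, which is more delicate than in Lemma \ref{lem:Lo14Lem6-1analogue} because $\Ac_{1,1/2}\setminus \Ac_1$ now contains genuinely rank-two shifted objects coming from $\Fc^{l,-}[1]$ alongside the horizontally supported $1$-dimensional sheaves $\scalea{\gyoung(;;*,;+;*)}$ and the $2$-dimensional $\whPhi$-WIT$_0$ sheaves $\scalea{\gyoung(;+;*,;+;*)}$. I expect one has to split the argument, tracking numerical invariants such as $f\ch_1$, $\Theta\ch_1$ and rank separately for the $H^{-1}$ and $H^0$ parts, and then translate the chains via $\whPhi$ into ascending chains inside a manifestly noetherian subcategory of $\Coh(X)$ in order to pin down stabilisation.
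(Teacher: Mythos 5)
There are two genuine problems with your proposal. First, your argument for closure under $\Bl$-quotients is not valid: you claim that a $\Tc^{l,0}$-component of a quotient $A''$ "cannot arise as a quotient of objects whose phases tend to $1$ or to $\tfrac{1}{2}$", but phases impose no such constraint on quotients of \emph{arbitrary} objects — only on quotients of $Z^l$-semistable ones. Indeed, the whole point of Lemma \ref{lem:Lo14Lem5-9analogue} and of several cases in the proof of Theorem \ref{thm:Lo14Thm5-analogue}(A) is that an object with $\phi \to \tfrac{1}{2}$ \emph{can} admit a $\Bl$-quotient in $\Tc^{l,0}$ with $\phi \to 0$; that is precisely the destabilising configuration one has to rule out there. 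So this step of your argument fails as stated. Second, for the decomposition you set up an iterative extraction of maximal subobjects followed by the octahedral axiom, and you concede you cannot pin down the stabilisation of the resulting chains. That difficulty is real if one insists on mimicking the proof of Lemma \ref{lem:Lo14Lem6-1analogue}, but it is self-inflicted.

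The missing idea is the second displayed form of $\Ac_{1,1/2}$ in \eqref{eq:A1and1over2definition}: $\Ac_{1,1/2} = \langle \Fc^l[1], \mathcal{E}\rangle$, where $\mathcal{E}$ denotes the extension closure in $\Coh(X)$ of the sheaf categories appearing there other than $\Fc^l[1]$. Unlike $\Ac_1$, the category $\Ac_{1,1/2}$ contains \emph{all} of $\Fc^l[1]$, so the entire $H^{-1}$-part of any $F \in \Bl$ is automatically absorbed and nothing has to be extracted from it. One then checks that $\mathcal{E}$ is a torsion class in $\Coh(X)$ with $\mathcal{E} = \{H^0(F) : F \in \Ac_{1,1/2}\}$. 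Both required properties follow at once: for quotient-closure, if $A \twoheadrightarrow A''$ in $\Bl$ then $H^{-1}(A'') \in \Fc^l$ and $H^0(A'')$ is a quotient sheaf of $H^0(A) \in \mathcal{E}$, hence lies in $\mathcal{E}$, so $A'' \in \Ac_{1,1/2}$ (this is the correct, sheaf-level reason the $\Tc^{l,0}$-component vanishes); for the decomposition of $F \in \Bl$, one applies the $(\mathcal{E}, \mathcal{E}^\circ)$-decomposition to the single coherent sheaf $H^0(F) \in \Tc^l$ and glues the torsion part onto $H^{-1}(F)[1]$. No transfinite extraction, no octahedral diagram, and the only noetherianity used is that of $\Coh(X)$ in establishing that $\mathcal{E}$ is a torsion class.
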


\begin{proof}
For the purpose of this proof, let us write
\[
  \mathcal{E} =  \xymatrix @-2.3pc{
\scalea{\gyoung(;;,;;+)} &  \scalea{\gyoung(;;+,;;+)} &  \scalea{\gyoung(;;*,;+;*)} & \scalea{\gyoung(;+;*,;+;*)} \\
& \scalea{\gyoung(;;+,;;0)} & &
}.
\]
(Recall that concatenation of 2 by 2 boxes of the form $\scalea{\gyoung(;;,;;)}$ means their extension closure.) It is easy to check that $\mathcal{E}$ is a torsion class in $\Coh (X)$ and that
\[
  \mathcal{E} = \{ H^0(F) : F \in \Ac_{1,1/2}\}.
\]
The same argument as in \cite[Lemma 6.2]{Lo14} then shows that every object in $\Bc^l$ can be written as the extension of an object in $\mathcal{E}$ by an object in $\Ac_{1,1/2}$, proving the lemma.
\end{proof}

Now that we know $\Ac_1, \Ac_{1,1/2}$ are both torsion classes in $\Bl$ with the inclusion $\Ac_1 \subseteq \Ac_{1,1/2}$, we can construct the torsion triple in $\Bl$
\begin{equation}\label{eq:torsiontripleAlowerstar}
  (\Ac_1, \,\Ac_{1,1/2} \cap \Ac_1^\circ, \,\Ac_{1,1/2}^\circ ).
\end{equation}
We have the following finiteness properties for the components of this torsion triple:

\begin{prop}\label{prop:Alowerstarfinitenessproperties}
The following finiteness properties hold:
\begin{itemize}
\item[(1)] For $\Ac = \Ac_1$:
\begin{itemize}
\item[(a)] There is no infinite sequence of strict monomorphisms in $\Ac$
\begin{equation}\label{eq:prop-finiteness-eq1}
  \cdots \hookrightarrow E_n \hookrightarrow \cdots \hookrightarrow E_1 \hookrightarrow E_0.
\end{equation}
\item[(b)] There is no infinite sequence of strict epimorphisms in $\Ac$
\begin{equation}\label{eq:prop-finiteness-eq2}
  E_0 \twoheadrightarrow E_1 \twoheadrightarrow \cdots \twoheadrightarrow E_n \twoheadrightarrow \cdots.
\end{equation}
\end{itemize}
\item[(2)] For $\Ac = \Ac_{1,1/2} \cap \Ac_1^\circ$:
    \begin{itemize}
    \item[(a)] There is no infinite sequence of strict monomorphisms \eqref{eq:prop-finiteness-eq1} in $\Ac$.
    \item[(b)] There is no infinite sequence of strict epimorphisms \eqref{eq:prop-finiteness-eq2} in $\Ac$.
    \end{itemize}
\item[(3)] For $\Ac = \Ac_{1,1/2}^\circ$:
  \begin{itemize}
  \item[(a)] There is no infinite sequence of strict monomorphisms \eqref{eq:prop-finiteness-eq1} in $\Ac$.
  \item[(b)] There is no infinite sequence of strict epimorphisms \eqref{eq:prop-finiteness-eq2} in $\Ac$.
  \end{itemize}
\end{itemize}
\end{prop}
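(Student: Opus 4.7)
The plan is to mirror the strategy of \cite[Section 6]{Lo14}, where the analogous statement for limit tilt stability on elliptic threefolds is established. In each of the three cases the argument will reduce chain conditions in $\Bl$ to chain conditions in $\Coh (X)$ via the long exact sequence of standard cohomology sheaves, and then exploit the noetherian property of $\Coh (X)$ together with boundedness of the relevant numerical invariants extracted from the Chern character.

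Concretely, any $\Bl$-short exact sequence $0 \to A \to B \to C \to 0$ yields a six-term exact sequence
\[
0 \to H^{-1}(A) \to H^{-1}(B) \to H^{-1}(C) \to H^0(A) \to H^0(B) \to H^0(C) \to 0
\]
with the $H^{-1}$-terms in $\Fc^l$ and the $H^0$-terms in $\Tc^l$. For each of the three torsion classes I would first pin down the possible cohomology sheaves of its objects, using the definitions \eqref{eq:A1definition} and \eqref{eq:A1and1over2definition} together with the torsion quintuple \ref{para:Bltorsiontriplequintuple}. Objects of $\Ac_1$ have $H^{-1}\in \Fc^{l,0}$ and $H^0$ in the extension closure of $\Coh^{\leq 0}(X)$ with the $\mu$-positive fiber sheaves; objects of $\Ac_{1,1/2}\cap \Ac_1^\circ$ additionally admit an $\Fc^{l,-}$-contribution on $H^{-1}$ and pick up the $\mu_f=0$ fiber sheaves as well as the two-dimensional $\whPhi$-WIT$_0$ constituents on $H^0$; and objects of $\Ac_{1,1/2}^\circ$ are characterized by orthogonality to the preceding pieces.

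For each case I would treat (a) and (b) in parallel. Given a chain of strict $\Bl$-monomorphisms $\cdots \hookrightarrow E_n \hookrightarrow \cdots \hookrightarrow E_0$, the $\Bl$-cokernels $Q_n=E_0/E_n$ form a chain of strict $\Bl$-epimorphisms $Q_n \twoheadrightarrow Q_{n-1}$; dually, a chain of strict epimorphisms $E_0 \twoheadrightarrow E_1 \twoheadrightarrow \cdots$ yields kernels $K_n$ forming a chain of strict monomorphisms ending in $E_0$. In either situation the long exact sequence above produces sheaf-level chains inside $H^{-1}(E_0)$ and $H^0(E_0)$. The ascending sheaf-level chains stabilize by noetherianness of $\Coh (X)$. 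For the descending sheaf-level chains, I would observe that the relevant Chern character components of the successive subquotients --- in each case a suitable combination of $\ch_0$, $f\ch_1$, $(\Theta+mf)\ch_1$ and $\ch_2$ --- are non-negative and bounded above by the corresponding invariants of the fixed endpoint $E_0$, and hence take only finitely many integer values along the chain. A short induction on these invariants, combined with stabilization of the ascending pieces, then forces stabilization in $\Bl$ itself.

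The main technical obstacle will be the intermediate case $\Ac = \Ac_{1,1/2}\cap \Ac_1^\circ$: unlike the two extremes, one must simultaneously control an $\Fc^{l,-}[1]$-part and a mixed $\Tc^l$-part containing both fiber-supported sheaves and two-dimensional $\whPhi$-WIT$_0$ sheaves, and check that the $\Ac_1^\circ$ condition is inherited by every member of the chain so that no latent $\Ac_1$-subobject emerges. This is where I would invoke Lemma \ref{lem:Lo14Lem6-1analogue}, namely that $\Ac_1$ is a torsion class in $\Bl$, together with the torsion-pair formalism of \ref{para:torsionpairtilting}, paralleling the argument of \cite[Lemma 6.1(b) and Proposition 6.3]{Lo14}. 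Once all three parts are in place, the finiteness properties established here feed directly into the standard construction of Harder-Narasimhan filtrations for $Z^l$-stability in the remainder of Section \ref{sec:HNproperty}.
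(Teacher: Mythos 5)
Your overall reduction -- pass to the long exact sequence of cohomology sheaves, use that $f\ch_1$, $\Theta\ch_1$, $-\ch_0$ and $\ch_2(H^0(-))$ are non-negative decreasing integers on the relevant categories, and invoke noetherianness of $\Coh(X)$ for the quotient chains of $H^0$'s -- is the right skeleton and does dispose of (1)(a) and (3)(b) essentially as in the paper. But two of your mechanisms fail at specific points, and these are exactly where the paper has to do something you have not accounted for. First, the claim that ``the ascending sheaf-level chains stabilize by noetherianness of $\Coh(X)$'' is not available for the chains $H^{-1}(E_0)\hookrightarrow H^{-1}(E_1)\hookrightarrow\cdots$ that arise in (1)(b) and (2)(b): these are not subsheaves of a fixed sheaf, so the ascending chain condition in $\Coh(X)$ says nothing about them. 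What saves the day is that the successive quotients are eventually $0$-dimensional, so every term embeds in the single reflexive sheaf $H^{-1}(E_{i+1})^{\ast\ast}$, which is independent of $i$; in case (2)(b) this double-dual argument must moreover be run on the transforms $\wh{H^{-1}(E_i)}$ after applying $\whPhi$, and it requires Lemma \ref{lem:A1over2H-1Eidescription} to know those transforms are torsion-free. Your proposal contains neither the reflexive-hull step nor that lemma.

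Second, for (2)(a) the numerical bookkeeping you describe does not close up. After the standard reductions the quotients $G_i$ land in $\scalea{\gyoung(;;+,;;0)}$: nonzero fiber sheaves with $\ch_2=0$ but $\Theta\ch_1>0$. To rule these out by your ``non-negative, bounded, hence finitely many values'' argument you would need $\Theta\ch_1$ (or some comparable invariant) to be bounded below on $\Ac_{1,1/2}\cap\Ac_1^\circ$, and it is not, because of the $\Fc^{l,-}[1]$ constituents whose $(\Theta+mf)\ch_1$ is uncontrolled. The paper instead applies $\whPhi$ to the chain of $H^0$'s and uses Lemma \ref{lem:Lo14Lem6-4analogue} (that $\whPhi^1(H^0(E_i))\in\Coh^{\leq 0}(X)$ for objects of $\Ac_{1,1/2}$) to reduce to an ascending chain inside a fixed sheaf plus a descending chain in $\Coh^{\leq 0}(X)$; a similar transform-based argument, with the nef degree $(\Theta+kf)\ch_1(\wh{E_i})$, handles (3)(a). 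Your appeal to Lemma \ref{lem:Lo14Lem6-1analogue} for the intermediate case is not the right tool: knowing $\Ac_1$ is a torsion class controls the torsion-pair decomposition but gives no chain condition. In short, the missing ideas are (i) the double-dual trick for unbounded ascending chains of torsion-free sheaves and (ii) the systematic use of the Fourier--Mukai transform, backed by the two auxiliary lemmas, to convert cases (2)(a), (2)(b) and (3)(a) into statements about sheaves.
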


Even though the proof of this proposition is modelled after that of \cite[Proposition 5.3]{Lo14}, we  lay out the details for clarity and ease of reference.  For instance, since the total space of our elliptic surface $X$ does not necessarily have Picard rank 2 as in \cite{Lo14}, the strategy of using the positivity of certain intersection numbers needs to be adjusted carefully.


\begin{proof}
In proving (1)(a), (2)(a) and (3)(a), we will consider the $\Bl$-short exact sequences
\begin{equation}\label{eq:prop-finiteness-eq3}
  0 \to E_{i+1} \overset{\beta_i}{\to} E_i \to G_i \to 0.
\end{equation}
On the other hand, in proving (1)(b), (2)(b) and (3)(b), we will consider the $\Bl$-short exact sequences
\begin{equation}\label{eq:prop-finiteness-eq4}
  0 \to K_i \to E_i \to E_{i+1} \to 0.
\end{equation}
Since $f\ch_1 \geq 0$ on $\Bl$ from \ref{para:TlFlproperties}(iv), we know $f\ch_1 (E_i)$ is a decreasing sequence of nonnegative integers when proving any of the six cases of this proposition.  Therefore, by omitting a finite number of terms in the sequence $E_i$ if necessary,  we can always assume that the  $f\ch_1 (E_i)$ are constant.  This also implies that $f\ch_1(G_i)=0$ and $f\ch_1(K_i)=0$ for all $i$, which in turn implies $f\ch_1(H^j(G_i))=0$ and $f\ch_1(H^j(K_i))=0$ for all $i,j$.

Throughout the proof, we will also fix an $m>0$ such that $\Theta + mf$ is an ample divisor on $X$.
\smallskip

 \textbf{(1)(a)}:  For any object $A \in \Fc^{l,0}[1]$, we know $f\ch_1(A)=0$ and $(\Theta + mf)\ch_1(A)=\Theta \ch_1(A) \geq 0$ by the definition of $\Fc^{l,0}$.  In addition,  any $A \in \langle \Coh^{\leq 0}(X), \scalea{\gyoung(;;+,;;+)}\,\rangle$ is a fiber sheaf and satisfies $\Theta \ch_1(A) \geq 0$.  Thus $\Theta \ch_1 \geq 0$ on $\Ac_1$, and by omitting a finite number of terms if necessary, we can assume that $\Theta \ch_1 (E_i)$ is constant and $\Theta \ch_1 (G_i)=0$ for all $i$.  Similarly, we can assume that $\ch_0 (E_i)$ is constant and $\ch_0 (G_i)=0$ for all $i$.

 That $\ch_0(G_i)=0$ implies $G_i = H^0(G_i)$, and so $G_i$ is a fiber sheaf.  That $\Theta \ch_1 (G_i)=0$ then implies $G_i$ must be supported in dimension 0.

 The long exact sequence of cohomology from \eqref{eq:prop-finiteness-eq3} now looks like
  \[
  0 \to H^{-1}(E_{i+1}) \to  H^{-1}(E_i)\to 0 \to H^0(E_{i+1}) \to H^0(E_i) \to H^0(G_i) \to 0,
  \]
  from which we see the $H^{-1}(E_i)$ stabilise.
  From the definition of $\Ac_1$, we also know $\ch_2 (H^0(A))\geq 0$ for  any $A \in \Ac_1$.  Thus $\ch_2 (H^0 (E_i))$ eventually stabilises, forcing $\ch_2 (H^0(G_i))=0$, in which case $G_i=H^0(G_i)=0$, i.e.\ the sequence  $E_i$ itself stabilises.

\textbf{(1)(b)}: from the long exact sequence of cohomology of \eqref{eq:prop-finiteness-eq4}, the $H^0(E_i)$ must eventually stabilise since $\Coh (X)$ is noetherian.  Hence let us suppose the $H^0(E_i)$ are constant.  The remainder of the long exact sequence reads
\[
0 \to H^{-1}(K_i) \to H^{-1}(E_i) \to H^{-1}(E_{i+1}) \to H^0(K_i) \to 0.
\]
Since $\ch_0\leq 0$ on $\Ac_1$, the sequence $\ch_0 (H^{-1}(E_i))$ eventually stabilises, so we can assume that $\ch_0 (H^{-1}(K_i))=0$ for all $i$ (noting $\ch_0(H^0(K_i))=0$), i.e.\ $H^{-1}(K_i)=0$, i.e.\ $K_i = H^0(K_i)$ is a fiber sheaf for all $i$.

As in (1)(a), we know $\Theta \ch_1 \geq 0$ on $\Ac_1$.  Hence $\Theta \ch_1 (E_i)$ eventually stabilises, giving $\Theta \ch_1 (K_i)=0$; since $K_i$ is a fiber sheaf, this forces $K_i$ to be supported in dimension 0.  The exact sequence above then gives
\[
  H^{-1}(E_i) \hookrightarrow H^{-1}(E_{i+1}) \hookrightarrow H^{-1}(E_{i+1})^{\ast\ast}
\]
where $H^{-1}(E_{i+1})^{\ast \ast}$ is independent of $i$ for $i \gg 0$ since $H^0(K_i) \in \Coh^{\leq 0}(X)$.  Thus the $H^{-1}(E_i)$ also stabilise, and the $E_i$ themselves stabilise.

\textbf{(2)(a)}: Recall from \eqref{eq:A1and1over2definition} that
\begin{equation*}
\Ac_{1,1/2} = \langle \Ac_1, \Fc^{l,-}[1], \scalea{\gyoung(;;+,;;0)},  \scalea{\gyoung(;;*,;+;*)}, \scalea{\gyoung(;+;*,;+;*)} \, \rangle = \langle \Fc^l[1], \xymatrix @-2.3pc{
\scalea{\gyoung(;;,;;+)} &  \scalea{\gyoung(;;+,;;+)} &  \scalea{\gyoung(;;*,;+;*)} & \scalea{\gyoung(;+;*,;+;*)} \\
& \scalea{\gyoung(;;+,;;0)} & &
}
\rangle.
\end{equation*}
Since  we can assume $f\ch_1(H^{-1}(G_i))=0$ and $f\ch_1(H^0(G_i))=0$, we have $H^{-1}(G_i) \in \Fc^{l,0}$ and know that $H^0(G_i)$ cannot have any subfactors in $\scalea{\gyoung(;;*,;+;*)}$ or $\scalea{\gyoung(;+;*,;+;*)}$.  Since $\beta_i$ is a strict morphism in $\Ac$, we have $G_i\in \Ac$ and so $\Hom (\Fc^{l,0}[1],G_i)=0$, i.e.\ $H^{-1}(G_i)=0$.  This leaves $G_i \in \langle \scalea{\gyoung(;;,;;+)}, \scalea{\gyoung(;;+,;;+)}, \scalea{\gyoung(;;+,;;0)}\rangle$, which means that $G_i$ is a fiber sheaf where all the HN factors with respect to the slope function $\ch_2/D\ch_1$ (for any ample divisor $D$ on $X$) have $\ch_2 \geq 0$.  Again by $\Hom (\Ac_1, G_i)=0$, we have $G_i \in \scalea{\gyoung(;;+,;;0)}$.

From the long exact sequence of cohomology of \eqref{eq:prop-finiteness-eq3}, we know the $H^{-1}(E_i)$ are constant and
\[
0 \to H^0(E_{i+1}) \to H^0 (E_i) \to H^0(G_i)\to 0
\]
is exact.  Applying the Fourier-Mukai functor $\whPhi$, we obtain the long exact sequence of sheaves
\[
0 \to \whPhi^0 (H^0(E_{i+1})) \to \whPhi^0 (H^0 (E_i)) \to 0 \to \whPhi^1 (H^0(E_{i+1})) \to \whPhi^1 (H^0(E_i)) \to \whPhi^1 (H^0(G_i)) \to 0.
\]
According to Lemma \ref{lem:Lo14Lem6-4analogue} below, $\whPhi^1 (H^0(E_i)) \in \Coh^{\leq 0}(X)$ for all $i$.  Hence $\whPhi^0(H^0(E_i)), \whPhi^1(H^0(E_i))$ both stabilise for  $i \gg 0$, i.e.\ $H^0(E_i)$ themselves stabilise.  Overall, the $E_i$ stabilise.

\textbf{(2)(b)}: As in case (1)(b), we can assume the $H^0(E_i)$ are constant and that the $f\ch_1(E_i)$ are constant.  The argument for describing $G_i$ in (2)(a) applies to $K_i$ here, allowing us to conclude $H^{-1}(K_i)=0$ and $K_i = H^0(K_i) \in \scalea{\gyoung(;;+,;;0)}$.  The first half of the long exact sequence of cohomology of \eqref{eq:prop-finiteness-eq4} now reads
\[
0 \to H^{-1}(E_i) \to H^{-1}(E_{i+1}) \to H^0(K_i) \to 0,
\]
where all the terms  are $\whPhi$-WIT$_1$ sheaves.   The Fourier-Mukai functor $\whPhi$ then takes it to a short exact sequence of sheaves
\[
0 \to \wh{H^{-1}(E_i)} \to \wh{H^{-1}(E_{i+1})} \to \wh{H^0(K_i)} \to 0\]
where $\wh{H^0(K_i)} \in \Coh^{\leq 0}(X)$.  By Lemma \ref{lem:A1over2H-1Eidescription} below, each $\wh{H^{-1}(E_i)}$ is a torsion-free sheaf.  Hence we have the inclusions
\[
  \wh{H^{-1}(E_i)} \hookrightarrow \wh{H^{-1}(E_{i+1})} \hookrightarrow (\wh{H^{-1}(E_{i+1})})^{\ast \ast}
\]
where $(\wh{H^{-1}(E_{i+1})})^{\ast \ast}$ is independent of $i$.  Thus the $H^{-1}(E_i)$ must stabilise, and so the $E_i$ themselves stabilise.



\textbf{(3)(a)}: Since $\Fc^l[1]$ is contained in $\Ac_{1,1/2}$, any object  $M \in \Ac_{1,1/2}^\circ$ must have $H^{-1}(M)=0$, i.e.\ $M = H^0(M)$.  Also, since we have the inclusion $W_{0,\whPhi} \subset \Ac_{1,1/2}$, it follows that
 \begin{equation}\label{eq:A112cCohinW1Tl}
 \Ac_{1,1/2}^\circ \cap \Coh (X) \subset W_{1,\whPhi} \cap \Tc^l.
\end{equation}
Hence  $E_i, G_i$  lie in $W_{1,\whPhi} \cap \Tc^l$ for all $i$.  Then $\ch_0 (E_i) \geq 0$ for all $i$, and we can assume $\ch_0(E_i)$ is constant while $\ch_0(G_i)=0$ for all $i$ by omitting a finite number of terms.  By \ref{para:W1xTcldecomposition}, we know each $G_i$ is a $\whPhi$-WIT$_1$ fiber sheaf.  Since $\scalea{\gyoung(;;+,;;0)} \subset \Ac_{1,1/2}$, we have $G_i \in \scalea{\gyoung(;;+,;;-)}$.  The $\Bl$-short exact sequence \eqref{eq:prop-finiteness-eq3} is then  taken by $\whPhi[1]$ to a short exact sequence in $\Coh^{\leq 1}(X)$
\[
0 \to \wh{E_{i+1}} \to \wh{E_i} \to \wh{G_i} \to 0.
\]
For any ample divisor on $X$ of the form $\omega'=\Theta + kf$ where $k$ is a positive integer,  we see that $\omega' \ch_1 (\wh{E_i})$ is a decreasing sequence of nonnegative integers, and so must become stationary, in which case the fiber sheaf $\wh{G_i}$ must be supported in dimension 0.  This implies, however, that  $\wh{G_i} \in \Coh^{\leq 0}(X) \cap \scalea{\gyoung(;+,;+)}$, forcing $G_i=0$, i.e.\ the $E_i$ eventually stabilise.

\textbf{(3)(b)}: As in (3)(a), the objects $E_i, K_i$  lie in $W_{1,\whPhi} \cap \Tc^l$ for all $i$, so \eqref{eq:prop-finiteness-eq4} is a short exact sequence of sheaves.  Since $\Coh(X)$ is noetherian, the $E_i$  eventually stabilise.
\end{proof}

\begin{lem}\label{lem:Lo14Lem6-4analogue}
Let $A \in \Ac_{1,1/2}$, and let $A_1$ denote the $\whPhi$-WIT$_1$ component of $H^0(A)$.  Then $\whPhi A_1 [1] \in \Coh^{\leq 0}(X)$.
\end{lem}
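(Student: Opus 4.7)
The plan is to identify $\whPhi A_1[1]$ with the higher cohomology $\whPhi^1 H^0(A)$ and then to trace this cohomology through a filtration of $H^0(A)$ coming from the torsion class $\mathcal{E}$.

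First I would take the $(W_{0,\whPhi}, W_{1,\whPhi})$-decomposition $0 \to A_0 \to H^0(A) \to A_1 \to 0$ in $\Coh(X)$ and apply $\whPhi$. Since $A_0 \in W_{0,\whPhi}$ is concentrated in cohomological degree $0$ and $A_1 \in W_{1,\whPhi}$ is concentrated in degree $1$ under $\whPhi$, the long exact sequence collapses and yields $\whPhi^0 H^0(A) \cong \wh{A_0}$ and $\whPhi^1 H^0(A) \cong \wh{A_1} = \whPhi A_1[1]$. Thus the conclusion reduces to showing $\whPhi^1 H^0(A) \in \Coh^{\leq 0}(X)$.

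By the proof of Lemma \ref{lem:Lo14Lem6-2analogue}, $H^0(A)$ lies in the torsion class $\mathcal{E}$, so it admits a filtration $0 = H_0 \subset H_1 \subset \cdots \subset H_n = H^0(A)$ in $\Coh(X)$ with each quotient $H_i/H_{i-1}$ in one of the five generating categories of $\mathcal{E}$. I would then classify these building blocks by their $\whPhi$-WIT index: the four categories $\Coh^{\leq 0}(X), \scalea{\gyoung(;;+,;;+)}, \scalea{\gyoung(;;*,;+;*)}, \scalea{\gyoung(;+;*,;+;*)}$ all lie in $W_{0,\whPhi}$ (either directly from the definitions in Section \ref{sec:prelim} or from the $\whPhi$-analogue of the Fourier--Mukai equivalence diagram), so $\whPhi^1$ vanishes on each; and the remaining category $\scalea{\gyoung(;;+,;;0)} = \Phi(\Coh^{\leq 0}(X))$ consists of $\whPhi$-WIT$_1$ sheaves whose transform lies in $\Coh^{\leq 0}(X)$, because $\whPhi\Phi = \mathrm{id}[-1]$ forces $\wh{G} = Q$ whenever $G = \Phi Q$ with $Q \in \Coh^{\leq 0}(X)$. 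Either way, $\whPhi^1(H_i/H_{i-1}) \in \Coh^{\leq 0}(X)$.

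Finally I would propagate the conclusion inductively along the filtration. Using the exact triangles $\whPhi H_{i-1} \to \whPhi H_i \to \whPhi(H_i/H_{i-1}) \to$ and the vanishing $\whPhi^{\geq 2} = 0$ on coherent sheaves (which follows from $\pi_2 \colon X \times_B X \to X$ having one-dimensional fibers), the relevant tail of the long exact sequence reads
\[
  \whPhi^1 H_{i-1} \to \whPhi^1 H_i \to \whPhi^1(H_i/H_{i-1}) \to 0,
\]
exhibiting $\whPhi^1 H_i$ as an extension of an object in $\Coh^{\leq 0}(X)$ by a quotient of $\whPhi^1 H_{i-1}$. Since $\Coh^{\leq 0}(X)$ is closed under extensions and quotients, induction on $i$ delivers $\whPhi^1 H^0(A) \in \Coh^{\leq 0}(X)$. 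The main obstacle is really just the bookkeeping of $\whPhi$-WIT indices for the five generating categories of $\mathcal{E}$; once this is pinned down and the vanishing $\whPhi^{\geq 2} = 0$ on sheaves is in hand, the rest is a routine long-exact-sequence induction driven by the cohomological amplitude of $\whPhi$.
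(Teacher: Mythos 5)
Your argument is correct and is essentially the paper's proof: the paper likewise observes that the property $\whPhi^1 M \in \Coh^{\leq 0}(X)$ holds on each generating category of $\Ac_{1,1/2}$ and is preserved under extension, via the same long-exact-sequence bookkeeping you carry out. The only cosmetic difference is that you run the induction on a filtration of $H^0(A)$ inside $\Coh(X)$ using $\mathcal{E}$, whereas the paper phrases the extension-closure argument directly in $\Bl$ on the generators of $\Ac_{1,1/2}$ (including $\Fc^l[1]$, which is $\whPhi$-WIT$_0$); both routes are valid and of equal length.
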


\begin{proof}
For  objects $M \in \Bl$, the property
\[
  \whPhi^1 M \in \Coh^{\leq 0}(X)
\]
is preserved under extension in $\Bl$.  Since this property is satisfied for all objects in the categories that generate $\Ac_{1,1/2}$, it is satisfied for all objects in $\Ac_{1,1/2}$.
\end{proof}


%

\begin{lem}\label{lem:A1over2H-1Eidescription}
Suppose $E \in \Ac_1^\circ = \{ E \in \Bl : \Hom (\Ac_1, E)=0\}$.  Then $H^{-1}(E)$ is locally free and $\wh{H^{-1}(E)}$ is torsion-free.
\end{lem}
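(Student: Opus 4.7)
My plan is to treat the two conclusions separately. Throughout, write $G = H^{-1}(E)$, which lies in $\Fc^l \subseteq W_{1,\whPhi}$ by \ref{para:TlFlproperties}(v); applying $\Phi$ to the identity $\whPhi G = \wh{G}[-1]$ together with $\Phi\whPhi = \mathrm{id}_{D^b(X)}[-1]$ gives $\Phi\wh{G} = G$, so $\wh{G}$ is itself a $\Phi$-WIT$_0$ sheaf.

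The local-freeness of $G$ will reduce, via Lemma \ref{lem:surfaceshlocallyfreechar}, to showing $\Ext^1(Q,G)=0$ for every $Q \in \Coh^{\leq 0}(X)$. Since $\Coh^{\leq 0}(X)\subseteq \Ac_1$, the hypothesis $E\in \Ac_1^\circ$ gives $\Hom_{D^b(X)}(Q,E)=0$. Applying $\Hom(Q,-)$ to the exact triangle $G[1]\to E\to H^0(E)$, and using the vanishing $\Hom(Q,H^0(E)[-1])=0$ that holds because both $Q$ and $H^0(E)$ are sheaves, yields the injection $\Ext^1(Q,G)\hookrightarrow \Hom(Q,E)=0$ and finishes this step.

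For the torsion-freeness of $\wh{G}$, I would assume that $T\subseteq \wh{G}$ is a nonzero torsion subsheaf and classify $T$ via the subcategories set up in Section \ref{sec:prelim}. When $T$ is $\Phi$-WIT$_0$ (i.e.\ $T$ lies in $\Coh^{\leq 0}(X)$ or $\scalea{\gyoung(;;+,;;+)}$), applying $\Phi$ to $T\hookrightarrow \wh{G}$ yields a morphism $\wh{T}\to G$ of sheaves, where $\wh{T}$ is torsion by the equivalences displayed in Section \ref{sec:prelim}; since $G$ is torsion-free this morphism vanishes, forcing $T=0$. When $T$ is a fiber sheaf with $\mu\leq 0$, i.e.\ $T\in \scalea{\gyoung(;;+,;;0)}\cup \scalea{\gyoung(;;+,;;-)}$, then $T$ is $\Phi$-WIT$_1$ and $\wh{T}=\Phi T[1]$ lies in $\Coh^{\leq 0}(X)\cup \scalea{\gyoung(;;+,;;+)}\subseteq \Ac_1$; the nonzero transformed morphism $\wh{T}\to G[1]$ in $D^b(X)$, composed with $G[1]\hookrightarrow E$ and still nonzero thanks to $\Hom(\wh{T},H^0(E)[-1])=0$, produces a nonzero $\Bl$-morphism from an $\Ac_1$-object to $E$, contradicting the hypothesis.

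The hard part will be the remaining horizontal case. The previous arguments rule out any fiber or zero-dimensional subsheaf of $T$, so $T\in \Coh^1(\pi)_1\cap \{\Coh^{\leq 0}\}^\uparrow = \scalea{\gyoung(;;*,;+;*)}$. Such $T$ is $\Phi$-WIT$_0$ with $\ch_0(\wh{T})=f\ch_1(T)>0$, and because $T$ and $\wh{G}$ are both $\Phi$-WIT$_0$, applying $\Phi$ sends the inclusion $T\hookrightarrow \wh{G}$ to an injection $\wh{T}\hookrightarrow G$ in $\Coh(X)$. Closure of $\Fc^l$ under subsheaves gives $\wh{T}\in \Fc^l$, while \eqref{eq:cohomFMT-Phi-ch1} gives $f\ch_1(\wh{T})=-\ch_0(T)=0$; the subsheaf slope bound built into $\Fc^l$ then upgrades $\wh{T}$ to an object of $\Fc^{l,0}$, so that $\wh{T}[1]\in \Fc^{l,0}[1]\subseteq \Ac_1$. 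I would then argue that the composition $\wh{T}[1]\to G[1]\hookrightarrow E$ in $\Bl$ is nonzero (again via $\Hom(\wh{T}[1],H^0(E)[-1])=0$), so its $\Bl$-image is a nonzero subobject of $E$ which, as a $\Bl$-quotient of $\wh{T}[1]\in \Ac_1$, remains in $\Ac_1$ by the quotient-closure established in Lemma \ref{lem:Lo14Lem5-7analogue}, once again contradicting $E\in \Ac_1^\circ$.
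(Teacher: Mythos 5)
Your proof is correct and follows essentially the same strategy as the paper's: local-freeness of $H^{-1}(E)$ comes from $\Hom(\Coh^{\leq 0}(X),E)=0$, and torsion-freeness of $\wh{H^{-1}(E)}$ comes from transforming a putative torsion subsheaf into an object of $\Ac_1$ admitting a nonzero map to $E$. The only notable variation is how the horizontal $\Phi$-WIT$_0$ piece $T$ is eliminated: you observe $\wh{T}\in\Fc^{l,0}$ and invoke $\Fc^{l,0}[1]\subseteq\Ac_1$, whereas the paper first deduces $H^{-1}(E)\in\Fc^{l,-}$ from $\Hom(\Fc^{l,0}[1],E)=0$ and then rules out $\wh{T}$ as a subsheaf of an $\Fc^{l,-}$-object with $f\ch_1=0$; both routes are valid.
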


\begin{proof}
 consider the exact sequence
\[
0 \to H^{-1}(E) \to H^{-1}(E)^{\ast \ast} \to Q \to 0
\]
where $Q$ is some coherent sheaf supported in dimension 0; this gives a $\Bl$-short exact sequence
\[
0 \to Q \to H^{-1}(E)[1] \to H^{-1}(E)^{\ast \ast}[1] \to 0.
\]
Since $E \in \Ac_1^\circ$, the term $Q$ must be zero, i.e.\ $H^{-1}(E)$ is locally free.

Recall from \ref{para:TlFlproperties}(v) that $H^{-1}(E)$ is $\whPhi$-WIT$_1$.  Also, that $E \in \Ac_1^\circ$ implies $\Hom (\Fc^{l,0}[1],E)=0$, and so $H^{-1}(E) \in \Fc^{l,-}$.

Suppose $\wh{H^{-1}(E)}$ has a  subsheaf $T$ that lies in $\Coh^{\leq 1}(X)$.  Let $T_i$ denote the $\Phi$-WIT$_i$ component of $T$.  The composite $T_0 \hookrightarrow T \hookrightarrow \wh{H^{-1}(E)}$ in $\Coh (X)$ is then taken by $\Phi$ to an injection of sheaves $\wh{T_0} \hookrightarrow H^{-1}(E)$.  Thus $\wh{T_0}$ is a torsion-free sheaf on $X$ and lies in $\Fc^{l,-}$ since $H^{-1}(E)$ is so.  However, since $\ch_0 (T_0)=0$, we must have $f\ch_1(\wh{T_0})=0$. This forces $\wh{T_0}$ and hence $T_0$ itself to  be zero, i.e.\ $T$ is a $\Phi$-WIT$_1$ fiber sheaf.  The inclusion $T \hookrightarrow \wh{H^{-1}(E)}$ then corresponds to an element in
\[
  \Hom (T, \wh{H^{-1}(E)}) \cong \Hom (\wh{T}[-1],H^{-1}(E)) \cong \Hom (\wh{T},H^{-1}(E)[1])
\]
where $\wh{T}=\Phi T[1]$.  Note that $\wh{T}$ is a $\whPhi$-WIT$_0$ fiber sheaf, and so is an object in $\Ac_1$.  Since $H^{-1}(E)[1]$ is a $\Bl$-subobject of $E$, which lies in $\Ac_1^\circ$, $H^{-1}(E)[1]$ itself lies in $\Ac_1^\circ$, which means the injection $T \hookrightarrow \wh{H^{-1}(E)}$ must be the zero map, i.e.\ $T=0$.  This proves that $\wh{H^{-1}(E)}$ is torsion-free.
\end{proof}

Let us now set
\begin{align*}
  \Ac_{1/2} &:= \Ac_{1,1/2} \cap \Ac_1^\circ \\
  \Ac_0 &:= \Ac_{1,1/2}^\circ,
\end{align*}
so that the torsion triple \eqref{eq:torsiontripleAlowerstar} can be rewritten as
\begin{equation}\label{eq:torsiontripleAlowerstar-v2}
(\Ac_1, \, \Ac_{1/2}, \, \Ac_0).
\end{equation}

The following is an analogue of \cite[Lemma 6.5]{Lo14}:

\begin{lem}\label{lem:Lo14Lem6-5analogue}
For $i=1,\tfrac{1}{2}, 0$ and any $F \in \Ac_i$, we have $\phi (F) \to i$.
\end{lem}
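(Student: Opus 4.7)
The plan is to treat the three cases $i = 1, 0, \tfrac{1}{2}$ separately, reducing each to the generator-by-generator computations recorded in \ref{para:phasesofobjects} and exploiting that the property ``$\phi \to i$'' is preserved under $\Bl$-extensions.

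For $i = 1$, the conclusion is immediate: by definition $\Ac_1 = \langle \Coh^{\leq 0}(X), \scalea{\gyoung(;;+,;;+)}, \Fc^{l,0}[1]\rangle$, and these three generating categories consist of objects with $\phi \to 1$ by cases (1), (2.2.1), and (5) of \ref{para:phasesofobjects} respectively. Since having $\Re Z_\omega < 0$ bounded and $\Im Z_\omega = O(1/v)$ is preserved under addition of complex numbers, the conclusion $\phi(F) \to 1$ propagates to all $F \in \Ac_1$.

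For $i = 0$, take $F \in \Ac_0 = \Ac_{1,1/2}^\circ$. Since $\Fc^l[1] \subseteq \Ac_{1,1/2}$, the vanishing $\Hom(\Ac_{1,1/2}, F) = 0$ forces $H^{-1}(F) = 0$, so $F \in \Coh(X)$. The argument from the proof of Proposition \ref{prop:Alowerstarfinitenessproperties}(3)(a) via \eqref{eq:A112cCohinW1Tl} then places $F$ in $W_{1,\whPhi} \cap \Tc^l$. Applying \ref{para:W1xTcldecomposition}, $F$ fits in a short exact sequence $0 \to F_0 \to F \to F/F_0 \to 0$ in $\Coh(X)$ with $F_0$ a $\whPhi$-WIT$_1$ fiber sheaf and $F/F_0 \in \Tc^{l,0}$. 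The vanishing $\Hom(\Ac_{1,1/2}, F) = 0$ rules out subobjects of $F_0$ in $\scalea{\gyoung(;;+,;;+)}$ and $\scalea{\gyoung(;;+,;;0)}$, forcing $F_0 \in \scalea{\gyoung(;;+,;;-)}$ and in particular $\ch_2(F_0) \leq 0$. Cases (2.2.3) and (4) of \ref{para:phasesofobjects} now give $\phi(F_0) \to 0$ and $\phi(F/F_0) \to 0$, so $\phi(F) \to 0$ by extension.

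For $i = \tfrac{1}{2}$, take $F \in \Ac_{1/2} = \Ac_{1,1/2} \cap \Ac_1^\circ$. If $f\ch_1(F) > 0$, then $\Im Z_\omega(F)$ is dominated by $v\cdot f\ch_1(F) \to +\infty$ while $\Re Z_\omega(F) = -\ch_2(F) + (\alpha + m - e)\ch_0(F)$ is $O(1)$, so $\phi(F) \to \tfrac{1}{2}$ at once. Otherwise $f\ch_1(F) = 0$, i.e.\ $F \in \Bl_0$, and $\Im Z_\omega(F) = u\Theta\ch_1(F) = O(1/v)$, so the limiting phase is controlled entirely by the sign of the constant $\Re Z_\omega(F)$. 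Any subfactor of $F$ in an $\Ac_{1,1/2}$-filtration must also lie in $\Bl_0$ (by additivity together with $f\ch_1 \geq 0$ on $\Bl$), so the possible generators are $\Coh^{\leq 0}(X), \scalea{\gyoung(;;+,;;+)}, \Fc^{l,0}[1]$ (each contributing $\Re Z_\omega < 0$ when nonzero) and $\scalea{\gyoung(;;+,;;0)}$ (contributing $\Re Z_\omega = 0$). Exploiting the torsion pair $(\Ac_1, \Ac_{1/2})$ in $\Ac_{1,1/2}$ and the hypothesis $F \in \Ac_1^\circ$, one inductively eliminates the $\Ac_1$-subfactors from the filtration of $F$: the first nonzero subobject in the filtration is a sub of $F \in \Ac_1^\circ$ and is one of the five generators, hence must be $\scalea{\gyoung(;;+,;;0)}$, after which one iterates the torsion-pair decomposition on the quotient. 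The resulting filtration of $F$ has all subfactors in $\scalea{\gyoung(;;+,;;0)}$, giving $\Re Z_\omega(F) = 0$ and $\Im Z_\omega(F) = u\Theta\ch_1(F) > 0$, so $\phi(F) = \tfrac{1}{2}$ exactly.

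The main obstacle will be the structural reduction in the $f\ch_1(F) = 0$ subcase of $i = \tfrac{1}{2}$: showing that an arbitrary $F \in \Ac_{1/2} \cap \Bl_0$ admits a $\Bl$-filtration whose subfactors all lie in $\scalea{\gyoung(;;+,;;0)}$. This requires carefully iterating the $(\Ac_1, \Ac_{1/2})$-decomposition on successive quotients while propagating the $\Ac_1^\circ$-property through pullbacks of $\Ac_1$-torsion subobjects back into $F$. Once this claim is established, the remaining Chern-character verifications are routine.
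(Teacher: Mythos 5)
Your cases $i=1$ and $i=0$ are correct and essentially identical to the paper's argument (same use of \eqref{eq:A112cCohinW1Tl}, the two-step filtration from \ref{para:W1xTcldecomposition}, and the Hom-vanishing forcing $F_0 \in \scalea{\gyoung(;;+,;;-)}$). The problem is the subcase $i=\tfrac{1}{2}$ with $f\ch_1(F)=0$, which you yourself flag as the main obstacle: the mechanism you propose there does not work as stated. You want to run an induction on a generator-filtration of $F$, arguing that the first subfactor lies in $\scalea{\gyoung(;;+,;;0)}$ because $F \in \Ac_1^\circ$, and then ``iterate the torsion-pair decomposition on the quotient.'' But $\Ac_1^\circ$ is closed under subobjects, not under quotients, so $F/U_1$ need not lie in $\Ac_1^\circ$ and the induction has nothing to bite on at the second step. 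Pulling an $\Ac_1$-torsion subobject of $F/U_1$ back to $\tilde{T} \subseteq F$ does not immediately contradict $F \in \Ac_1^\circ$ either, since $\tilde{T}$ is only an extension of an $\Ac_1$-object by a $\scalea{\gyoung(;;+,;;0)}$-object and hence not itself in $\Ac_1$. This matters because you genuinely need every $\Ac_1$-type subfactor to vanish: a single $\Coh^{\leq 0}(X)$- or $\scalea{\gyoung(;;+,;;+)}$-subfactor contributes a strictly negative amount to $\Re Z_\omega(F)$ and would push $\phi(F)$ to $1$ rather than $\tfrac{1}{2}$.

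The missing idea is to abandon the generator-filtration and work at the sheaf level. First, $\Hom(\Fc^{l,0}[1],F)=0$ kills $H^{-1}(F)$ (since $f\ch_1(F)=0$ forces $H^{-1}(F)\in\Fc^{l,0}$), so $F$ is a sheaf; then $F \in \Ac_{1,1/2}\cap\Coh(X)$ with $f\ch_1(F)=0$ forces $F$ to be a fiber sheaf all of whose $\mu$-HN factors (for the slope $\ch_2/D\ch_1$) have $\ch_2 \geq 0$. Now the maximal destabilising subsheaf of $F$ \emph{is} a genuine $\Bl$-subobject of $F$ (all terms lie in $\Tc^l$), and if its slope were positive it would lie in $\langle \Coh^{\leq 0}(X), \scalea{\gyoung(;;+,;;+)}\rangle \subset \Ac_1$, contradicting $F \in \Ac_1^\circ$. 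Hence $F \in \scalea{\gyoung(;;+,;;0)}$ and $\phi(F)=\tfrac{1}{2}$ exactly by \ref{para:phasesofobjects}(2.2.2). Your pullback idea can be repaired, but only by invoking exactly this maximal-destabilising-subsheaf step applied to $\tilde{T}$, at which point one may as well apply it to $F$ directly.
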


\begin{proof}
The case of $i=1$ follows from the definition of $\Ac_1$ and the computation in \ref{para:phasesofobjects}.

For $i=\tfrac{1}{2}$: take any $F \in \Ac_{1/2}$.  If $f\ch_1(F)>0$, then clearly $\phi (F) \to \tfrac{1}{2}$ and we are done.  Let us assume $f\ch_1(F)=0$ from now on.  Then $f\ch_1(H^{-1}(F))=0$, meaning $H^{-1}(F) \in \Fc^{l,0}$; however, $F \in \Ac_1^\circ$ and so $H^{-1}(F)$ must be zero, i.e.\ $F = H^0(F)$.

That $F \in \Ac_{1,1/2} \cap \Coh (X)$ with $f\ch_1(F)=0$ implies $F$ cannot have any subfactors in $\scalea{\gyoung(;;*,;+;*)}$ or $\scalea{\gyoung(;+*,;+*)}$.  Hence $F$ is a fiber sheaf where all the HN factors with respect to slope stability have $\ch_2 \geq 0$.  That  $F \in \Ac_1^\circ$ then forces $F \in \scalea{\gyoung(;;+,;;0)}$, giving us  $\phi (F) = \tfrac{1}{2}$ by  \ref{para:phasesofobjects}(2.2.2).

For $i=0$: take any $F \in \Ac_0$.  From \eqref{eq:A112cCohinW1Tl} we know $F \in W_{1,\whPhi} \cap \Tc^l$.  By \ref{para:W1xTcldecomposition}, we have a two-step filtration $F_0 \subseteq F_1 = F$ in $\Coh (X)$ where $F_0$ is a $\whPhi$-WIT$_1$ fiber sheaf while $F_1/F_0 \in \Tc^{l,0}$.  From \ref{para:phasesofobjects}-(4) we know $\phi (F_1/F_0) \to 0$, so it suffices to show $\phi (F_0) \to 0$.  Since $F \in \Ac_{1,1/2}^\circ$, we have $\Hom (\scalea{\gyoung(;;+,;;0)}, F_0)=0$, implying $F_0 \in \scalea{\gyoung(;;+,;;-)}$.  By \ref{para:phasesofobjects}(2.2.3) we have $\phi (F_0)\to 0$ as desired.
\end{proof}

\begin{lem}\label{lem:Lo14Lem6-6analogue}
An object $F \in \Bl$ is $Z^l$-semistable iff, for some $i=1, \tfrac{1}{2}, 0$, we have:
\begin{itemize}
\item $F \in \Ac_i$;
\item for any strict monomorphism $0 \neq F' \hookrightarrow F$ in $\Ac_i$, we have $\phi (F') \preceq \phi (F)$.
\end{itemize}
\end{lem}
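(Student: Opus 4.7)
The plan is to prove both directions of the equivalence using the torsion triple $(\Ac_1, \Ac_{1/2}, \Ac_0)$ from \eqref{eq:torsiontripleAlowerstar-v2} together with the phase computation in Lemma \ref{lem:Lo14Lem6-5analogue}. The central technical input is the standard phase inequality for central charges: for any $\Bl$-short exact sequence $0 \to A \to X \to B \to 0$ with $A, B$ nonzero, the phase $\phi(X)$ at every fixed $v$ lies between $\phi(A)$ and $\phi(B)$; consequently, if both $\phi(A) \preceq \phi(F)$ and $\phi(B) \preceq \phi(F)$, then $\phi(X) \preceq \phi(F)$.

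For $(\Rightarrow)$, I would first show that $F$ must belong to exactly one $\Ac_i$. Consider the $(\Ac_1, \Ac_1^\circ)$-decomposition $0 \to F^{(1)} \to F \to F' \to 0$. If both factors are nonzero, $Z^l$-semistability forces $\phi(F^{(1)}) \preceq \phi(F')$. However, Lemma \ref{lem:Lo14Lem6-5analogue} gives $\phi(F^{(1)}) \to 1$, while $F' \in \Ac_1^\circ = \langle \Ac_{1/2}, \Ac_0 \rangle$ satisfies $\phi(F') \preceq \tfrac{1}{2}$ asymptotically (apply the phase inequality to the $(\Ac_{1/2}, \Ac_0)$-decomposition of $F'$). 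For $v \gg 0$ the two germs are separated by a fixed gap, contradicting the required inequality. Hence $F \in \Ac_1$ or $F \in \Ac_1^\circ$, and in the latter case an identical argument using the torsion pair $(\Ac_{1,1/2}, \Ac_0)$ further confines $F$ to $\Ac_{1/2}$ or $\Ac_0$. The subobject condition is then immediate: every strict mono in $\Ac_i$ is in particular a $\Bl$-monomorphism, to which $Z^l$-semistability applies.

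For $(\Leftarrow)$, assume $F \in \Ac_i$ satisfies the subobject hypothesis and take any $\Bl$-short exact sequence $0 \to M \to F \to N \to 0$; the goal is $\phi(M) \preceq \phi(F)$. When $i = 0$, the class $\Ac_0 = \Ac_{1,1/2}^\circ$ is the torsion-free part of a torsion pair in $\Bl$ and hence is closed under subobjects, so $M \in \Ac_0$ and the hypothesis applies directly. When $i = \tfrac{1}{2}$, the inclusion $\Ac_{1/2} \subseteq \Ac_1^\circ$ combined with the closure of $\Ac_1^\circ$ under subobjects places $M$ in $\Ac_1^\circ$; then the $(\Ac_{1,1/2}, \Ac_0)$-decomposition of $M$ produces $0 \to M^{(1/2)} \to M \to M_0 \to 0$ with $M^{(1/2)} \in \Ac_{1/2}$ and $M_0 \in \Ac_0$. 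The hypothesis gives $\phi(M^{(1/2)}) \preceq \phi(F)$, and since $\phi(M_0) \to 0$ while $\phi(F) \to \tfrac{1}{2}$, also $\phi(M_0) \preceq \phi(F)$; the phase inequality closes the case.

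The case $i = 1$ is the main obstacle, because $\Ac_1$ is not closed under subobjects in $\Bl$, so $M$ itself need not lie in $\Ac_1$. The key step is to decompose $M$ via the torsion pair $(\Ac_1, \Ac_1^\circ)$ as $0 \to M^{(1)} \to M \to M' \to 0$ with $M^{(1)} \in \Ac_1$ and $M' \in \Ac_1^\circ$. The composition $M^{(1)} \hookrightarrow M \hookrightarrow F$ is a $\Bl$-monomorphism between objects of $\Ac_1$, so the hypothesis yields $\phi(M^{(1)}) \preceq \phi(F)$. For the other factor, the asymptotic bound $\phi(M') \preceq \tfrac{1}{2}$ together with $\phi(F) \to 1$ separates the two germs for $v \gg 0$, giving $\phi(M') \preceq \phi(F)$. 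The phase inequality then delivers $\phi(M) \preceq \phi(F)$, completing the proof.
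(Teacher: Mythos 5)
Your overall strategy --- reduce to the torsion triple $(\Ac_1,\Ac_{1/2},\Ac_0)$ of \eqref{eq:torsiontripleAlowerstar-v2}, separate phases via Lemma \ref{lem:Lo14Lem6-5analogue}, and close each case with the see-saw property of phases --- is the same one the paper imports from \cite[Lemma 6.6]{Lo14}, and your forward direction and the case $i=1$ of the converse are sound. The converse direction, however, has a gap in the cases $i=\tfrac{1}{2}$ and $i=0$: the hypothesis of the lemma only concerns \emph{strict} monomorphisms in $\Ac_i$, i.e.\ (as the paper uses the term in the proof of Proposition \ref{prop:Alowerstarfinitenessproperties}) monomorphisms whose cokernel, computed in $\Bl$, again lies in $\Ac_i$. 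For $i=1$ every $\Bl$-monomorphism between objects of $\Ac_1$ is automatically strict, because $\Ac_1$ is closed under quotients in $\Bl$ (Lemma \ref{lem:Lo14Lem5-7analogue}); that is why your appeal to the hypothesis for $M^{(1)}\hookrightarrow F$ is legitimate there. But $\Ac_{1/2}$ and $\Ac_0$ are cut out by torsion-free-type conditions and are \emph{not} closed under quotients in $\Bl$, so the inclusions $M^{(1/2)}\hookrightarrow F$ (case $i=\tfrac{1}{2}$) and $M\hookrightarrow F$ (case $i=0$) need not be strict --- their $\Bl$-cokernels may acquire nonzero components in $\Ac_1$, resp.\ in $\Ac_{1,1/2}$ --- and the hypothesis cannot be ``applied directly'' to them as written.

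The gap is fixable by a saturation step. For $i=0$: write $N=F/M$, take its $(\Ac_{1,1/2},\Ac_0)$-decomposition $0\to N'\to N\to N''\to 0$, and let $M'\subseteq F$ be the preimage of $N'$; then $M'\in\Ac_0$ (as a subobject of $F$) and $F/M'\cong N''\in\Ac_0$, so $M'\hookrightarrow F$ is strict and the hypothesis yields $\phi(M')\preceq\phi(F)$. Since $M'/M\cong N'\in\Ac_{1,1/2}$ has phase bounded below by $\tfrac{1}{2}$ asymptotically while $\phi(M')\preceq\phi(F)\to 0$, the see-saw forces $\phi(M)\preceq\phi(M')\preceq\phi(F)$. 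The case $i=\tfrac{1}{2}$ is handled the same way, replacing $M^{(1/2)}$ by the preimage in $F$ of the $\Ac_1$-part of $F/M^{(1/2)}$, which is a strict subobject in $\Ac_{1/2}$ whose phase dominates that of $M^{(1/2)}$ for $v\gg 0$. With these insertions your argument goes through.
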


\begin{proof}
Given Lemma \ref{lem:Lo14Lem6-5analogue}, the argument in the proof of \cite[Lemma 6.6]{Lo14}  applies.
\end{proof}

\begin{thm}\label{thm:main1}
The Harder-Narasimhan property holds for $Z^l$-stability on $\Bl$.  That is, every object $F \in \Bl$ admits a filtration in $\Bl$
\[
  F_0 \subseteq F_1 \subseteq \cdots \subseteq F_n = F
\]
where each $F_i/F_{i+1}$ is $Z^l$-semistable, and $\phi (F_i/F_{i-1}) \succ \phi (F_{i+1}/F_i)$ for each $i$.
\end{thm}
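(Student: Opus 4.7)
The plan is to construct Harder-Narasimhan filtrations in two stages: first coarsely via the torsion triple $(\Ac_1, \Ac_{1/2}, \Ac_0)$ on $\Bl$, and then finely within each of its three pieces.

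\emph{Stage 1 (coarse HN via the torsion triple).} Apply the torsion triple \eqref{eq:torsiontripleAlowerstar-v2} to any $F \in \Bl$ to obtain a canonical $\Bl$-filtration $0 \subseteq F^{(1)} \subseteq F^{(1/2)} \subseteq F$ with successive quotients in $\Ac_1$, $\Ac_{1/2}$, $\Ac_0$ respectively. By Lemma \ref{lem:Lo14Lem6-5analogue}, the phases of these factors tend to $1$, $\tfrac{1}{2}$, $0$ in that order, so they are strictly $\prec$-decreasing as $v \to \infty$. It therefore suffices to produce HN filtrations for objects lying in each $\Ac_i$ separately and concatenate them.

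\emph{Stage 2 (HN within each $\Ac_i$).} Fix $i \in \{1, \tfrac{1}{2}, 0\}$ and $G \in \Ac_i$; assume $G$ is not $Z^l$-semistable, otherwise there is nothing to do. By Lemma \ref{lem:Lo14Lem6-6analogue} there exists a strict monomorphism $G' \hookrightarrow G$ in $\Ac_i$ with $\phi(G') \succ \phi(G)$. I would now build a maximal destabilizer in $\Ac_i$, namely a subobject $G_1 \subseteq G$ with $G_1 \in \Ac_i$ of maximal $\prec$-phase; such $G_1$ is automatically $Z^l$-semistable, because any further destabilizer of $G_1$ inside $\Ac_i$ would embed into $G$ and violate the maximality of $\phi(G_1)$. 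Existence of $G_1$ is extracted from Proposition \ref{prop:Alowerstarfinitenessproperties}: for $\Ac_1$, part (1)(b) together with closure of $\Ac_1$ under $\Bl$-quotient yields an ascending chain condition on subobjects lying in $\Ac_1$, while (1)(a) supplies a descending chain condition, and a standard Zorn-type argument produces $G_1$. The quotient $G/G_1$ then again lies in $\Ac_1$, and one iterates; the iteration terminates by a second appeal to the finiteness in Proposition \ref{prop:Alowerstarfinitenessproperties}. Parts (2) and (3) would handle $\Ac_{1/2}$ and $\Ac_0$ analogously.

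The main obstacle is that the $\Ac_i$ are not abelian subcategories of $\Bl$: $\Ac_0$ is a torsion-free class (closed only under subobjects) and $\Ac_{1/2}$ is neither a torsion nor a torsion-free class. Consequently the ``pass to the quotient and iterate'' step for $\Ac_0$ must be replaced by a dual argument that extracts the \emph{last} HN factor as a suitable quotient using (3)(b), and for $\Ac_{1/2}$ one has to combine both halves of (2) carefully. Once this bookkeeping is in place, each iteration strictly shortens either an ascending chain of subobjects or a descending chain of quotients in $\Ac_i$, so the process terminates by Proposition \ref{prop:Alowerstarfinitenessproperties}, and the proof parallels that of the corresponding result in \cite[Section 6]{Lo14}.
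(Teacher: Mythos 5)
Your proposal is correct and follows essentially the same route as the paper: the paper's proof of Theorem \ref{thm:main1} simply combines the torsion triple \eqref{eq:torsiontripleAlowerstar-v2}, the chain conditions of Proposition \ref{prop:Alowerstarfinitenessproperties}, and Lemma \ref{lem:Lo14Lem6-6analogue}, and then defers to the argument of \cite[Theorem 6.7]{Lo14}, which is precisely the two-stage construction (coarse filtration by the torsion triple, then maximal destabilisers within each $\Ac_i$) that you spell out. The bookkeeping you flag because $\Ac_{1/2}$ and $\Ac_0$ are not torsion classes is exactly the content hidden in that citation.
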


\begin{proof}
Using the torsion triple \eqref{eq:torsiontripleAlowerstar-v2},  the finiteness properties in Proposition \ref{prop:Alowerstarfinitenessproperties}, along with Lemma  \ref{lem:Lo14Lem6-6analogue}, the  argument  in the proof of \cite[Theorem 6.7]{Lo14} applies.
\end{proof}

\section{Slope stability vs Bridgeland stability on Weierstra{\ss} surfaces}\label{sec:BrivslimitBri}

The following lemma gives a relation between $Z^l$-stability and Bridgeland stability on Weierstra{\ss} elliptic surfaces:

\begin{lem}
Let $p : X \to B$ be a Weierstra{\ss} surface.  Suppose $m>0$ is such that $\Theta + kf$ is ample for all $k \geq m$, and  $\omega$ is of the form \eqref{eq:omeganotation} subject to the constraint \eqref{eq:vhyperbolaequation}.  If there exists $v_0>0$ and an object $F \in D^b(X)$ such that, for all $v >v_0$, the divisor $\omega$ is ample and $F$ lies in $\Bc_\omega$ and is $Z_\omega$-(semi)stable, then  $F$ lies in $\Bc^l$ and is $Z^l$-(semi)stable.
\end{lem}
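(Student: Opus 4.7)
The plan is to exploit the alternative description of $\Tc^l$ and $\Fc^l$ recorded in \ref{para:TlFlproperties}, namely that they consist of exactly those coherent sheaves lying in $\Tc_\omega$ (resp.\ $\Fc_\omega$) for all $v \gg 0$ along the constraint \eqref{eq:vhyperbolaequation}. This lets me convert both membership in $\Bc^l$ and $Z^l$-(semi)stability into the corresponding statements for the Bridgeland pair $(\Bc_\omega, Z_\omega)$ at each large $v$.

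First I would verify that $F \in \Bc^l$. Since $F \in \Bc_\omega$ for all $v > v_0$, the only nonzero standard cohomology sheaves of $F$ are $H^{-1}(F)$ and $H^0(F)$, with $H^{-1}(F) \in \Fc_\omega$ and $H^0(F) \in \Tc_\omega$ simultaneously for every $v > v_0$. The characterization above then immediately yields $H^{-1}(F) \in \Fc^l$ and $H^0(F) \in \Tc^l$, hence $F \in \Bc^l$.

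For the stability claim I would take an arbitrary $\Bc^l$-short exact sequence $0 \to M \to F \to N \to 0$ with $M, N \neq 0$ and promote it to a $\Bc_\omega$-short exact sequence for $v \gg 0$. Applying the previous step's reasoning to $M$ and $N$ separately shows that $M, N \in \Bc_\omega$ for all sufficiently large $v$. Since all three vertices of the exact triangle $M \to F \to N \to M[1]$ in $D^b(X)$ then lie in the heart $\Bc_\omega$, this is indeed a $\Bc_\omega$-short exact sequence for $v \gg 0$. The hypothesis that $F$ is $Z_\omega$-(semi)stable for every such $v$ now gives $\phi (M) < \phi (N)$ (resp.\ $\leq$) pointwise for all $v$ sufficiently large, which is by definition exactly the germ-wise relation $\phi (M) \prec \phi (N)$ (resp.\ $\preceq$).

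The only delicate step I anticipate is the promotion of a $\Bc^l$-short exact sequence to a $\Bc_\omega$-short exact sequence for $v \gg 0$, but this rests entirely on the alternative descriptions of $\Tc^l, \Fc^l$ together with the standard fact that an exact triangle whose three vertices all lie in an abelian heart is automatically a short exact sequence there. Everything else is a direct unwinding of the definition of $Z^l$-(semi)stability in terms of the family of Bridgeland stability conditions $(\Bc_\omega, Z_\omega)$ along \eqref{eq:vhyperbolaequation}.
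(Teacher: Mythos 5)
Your proposal is correct and follows essentially the same route as the paper, which simply cites the equivalent descriptions of $\Tc^l$ and $\Fc^l$ from \ref{para:TlFlproperties} (and refers to the threefold analogue in the earlier work); you have merely written out the details — passing to $H^{-1}$ and $H^0$, promoting a $\Bc^l$-short exact sequence to a $\Bc_\omega$-one for $v\gg 0$, and reading off the germ-wise phase inequality. The only point worth keeping in mind is that the threshold in ``$v\gg 0$'' depends on the objects $M$ and $N$, but since only finitely many objects are involved in any given short exact sequence this causes no trouble.
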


\begin{proof}
This follows from the equivalent definitions of $\Tc^l$ and $\Fc^l$ in \ref{para:TlFlproperties}.  (See also \cite[Lemma 7.1]{Lo14})
\end{proof}

\begin{rem}
Conceivably, for any fixed Chern character, there would be `mini-walls' of Bridgeland stability as the parameter $v$ approaches infinity along the curve \eqref{eq:vhyperbolaequation}.  Results on boundedness of mini-walls similar to those in \cite{LQ} or more general results on walls on the $vu$-plane on Weierstra{\ss} surfaces will then yield equivalence between $Z^l$-stability and $Z_\omega$-stability, i.e.\ between the `limit' Bridgeland stability in this article and Bridgeland stability.  Along with Theorem \ref{thm:Lo14Thm5-analogue}, they can then produce morphisms between moduli of slope stable torsion-free sheaves and moduli of Bridgeland stable objects on Weierstra{\ss} surfaces.
\end{rem}


\bibliography{refs}{}
\bibliographystyle{plain}

                                                              \end{document}